\newcommand{\N}{\mathbb{N}} 
\newcommand{\R}{\mathbb{R}}
\newcommand{\normal}{\nu}
\renewcommand{\thefootnote}{\fnsymbol{footnote}}
\newcommand{\w}{\mathrm{w}}
\newtheorem{theorem}{Theorem}[section]
\newtheorem{lemma}[theorem]{Lemma}
\newtheorem{problem}{Problem}[section]
\newtheorem{remark}[theorem]{Remark}
\newtheorem{definition}{Definition}[section]
\newtheorem{assumption}{Assumption}[section]
\date{\displaydate{date}}
\numberwithin{equation}{section}
\newcommand\reallywidehat[1]{%
\savestack{\tmpbox}{\stretchto{%
  \scaleto{%
    \scalerel*[\widthof{\ensuremath{#1}}]{\kern.1pt\mathchar"0362\kern.1pt}%
    {\rule{0ex}{\textheight}}
  }{\textheight}%
}{2.4ex}}%
\stackon[-6.9pt]{#1}{\tmpbox}%
}
\begin{document}

\title{\LARGE \bf On an inverse problem with applications in  cardiac electrophysiology  }

\author[1]{Andrea Aspri}
\author[2]{Elena Beretta}
\author[3]{Elisa Francini}
\author[4]{Dario Pierotti}
\author[3]{Sergio Vessella}
\affil[1]{Department of Mathematics, Università degli Studi di Milano}
\affil[2]{Department of Mathematics, NYU Abu Dhabi}
\affil[3]{Department of Mathematics and Computer Science, Università degli Studi di Firenze}
\affil[4]{Department of Mathematics, Politecnico di Milano}

\vspace{5mm}

\date{}

\maketitle

\thispagestyle{plain}
\pagestyle{plain}

\let\thefootnote\relax\footnotetext{
AMS 2020 subject classifications: 35R30, 35K58

Key words and phrases: inverse problems, nonlinear boundary value problem, cardiac electrophysiology

\thanks{}

}

\begin{abstract}
In this paper, we consider the monodomain model of cardiac electrophysiology.
After an analysis of the well-posedness of the forward problem, we show that perfectly insulating regions (modeling ischemic regions in the cardiac tissue) can be uniquely determined by partial boundary measurements of the potential.

\end{abstract}
\vskip 10truemm

\section{Introduction}

Mathematical models describing the electrical activity of the heart can provide quantitative tools to describe both normal and abnormal heart function. These models often complement imaging techniques, such as computed tomography and magnetic resonance, for diagnostic and therapeutic purposes. In this context, detecting pathological conditions or reconstructing model features, such as tissue conductivities, from potential measurements requires solving an inverse boundary value problem for a nonlinear system of partial differential equations ({\it monodomain model}, \cite{book:pavarino}). 

It has been observed that, after myocardial infarction and after the healing process is complete, there is a risk of developing lethal ventricular ischemic tachycardia.  This condition is caused by the presence of infarct scars, which have conductivity properties that differ from the surrounding healthy tissue, \cite{book:pavarino}. 
The determination of these regions and their shape from a recording of the electric activity of the heart is fundamental to performing successful ablation for the prevention of tachycardia. 
The authors of \cite{art:bcmp} studied a stationary version of the monodomain model, which led to the investigation of a Neumann problem for a semilinear elliptic equation. They modeled ischemic regions as small conductivity inhomogeneities with lower conductivity compared to the surrounding medium. The presence of the inhomogeneity caused a perturbation in the transmembrane potential, which was described in terms of an asymptotic expansion in a smallness parameter that provided information on the size and shape of the altered region. Based on these findings, a topological gradient method was implemented in \cite{art:BMR} to effectively reconstruct the inhomogeneities from boundary measurements of the potential. A similar analysis was then generalized in \cite{art:BCR20} to the monodomain time-dependent system. 
In \cite{art:BRV}, the authors analyzed a mathematical model that considered inhomogeneities of arbitrary shape and size in a two-dimensional setting. They focused on the difficult issue of reconstructing the conductivity inclusion from boundary measurements, which is a highly nonlinear severely ill-posed inverse problem. In fact, in this case, uniqueness can only be guaranteed if infinitely many measurements of solutions are available and this is clearly not realistic in the application we have in mind.
On the other hand, several studies show that the damaged area can be modeled as an electrical insulator \cite{art:PSIRBF}, \cite{art:RCSRGDRRA},\cite{art:FPLHMDQdB}. 
Mathematically, this leads to modeling the infarcted scar as a perfect
insulator, i.e. as a {\it cavities} in a nonlinear system of reaction-diffusion differential equations.  In this case, the inverse problem is treatable. 
In fact, for the stationary monodomain Beretta et al. in \cite{BCPcav} show that one boundary measurement is enough to recover a collection of well-separated planar cavities. Moreover, in this case, it is possible to implement an efficient reconstruction method of the cavities based on a phase field approach, \cite{art:BCPR}.

In this paper we extend the results obtained in \cite{BCPcav} to the three-dimensional monodomain time-dependent model.  More precisely, we consider the following boundary value problem
\begin{equation}
\left\{
\begin{aligned}
\partial_t u - div(K_0\nabla u) + f(u,w) &= 0 \qquad &\text{in } \Omega_D\times (0,T), \\
K_0 \nabla u \cdot \normal &= 0 \qquad &\text{on }  \partial\Omega_D\times (0,T), \\
\partial_t w + g(u,w) &= 0 \qquad &\text{in } \Omega_D\times (0,T) , \\
u(\cdot,0) = u_0 \qquad w(\cdot,0) &= w_0 \qquad &\text{in }  \Omega_D,
\end{aligned}
\right.
\label{probcavintro}
\end{equation}
where $\Omega_D=\Omega\backslash D$,  $D\subset\Omega\subset \R^d$, $\nu$ is the outward unit normal vector to the boundary $\partial\Omega_D$, $K_0$ the conductivity tensor. Also, we will analyze the problem considering different types of nonlinearities that are suited to describing cardiac electrophysiology:
the {\it Aliev-Panfilov} model
\begin{equation}
f(u,w) = Au(u-a)(u-1) + uw \qquad g(u,w) = \epsilon (Au(u-1-a) +  w),\quad A>0,\,0<a<1.
\label{eq:AP}
\end{equation}
the \textit{FitzHugh-Nagumo} model, where
\begin{equation}
\vspace{-0.25cm}
f(u,w) = Au(u-a)(u-1) + w \qquad g(u,w) = \epsilon (\gamma w-u),
\label{eq:FHN}
\end{equation}
and the \textit{Rogers-McCulloch} model, with
\begin{equation}
\vspace{-0.25cm}
f(u,w) = Au(u-a)(u-1) + uw \qquad g(u,w) = \epsilon (\gamma w-u),
\label{eq:RMC}
\end{equation}
see \cite{book:pavarino} for a general overview. 

In our application, the variable $u$ represents the transmembrane potential that propagates through the heart tissue. The heart muscle has a structure consisting of multiple fibers arranged in superimposed sheets, also known as laminas. This arrangement gives rise to preferred directions in the diffusion of the electrical stimulus, which are captured in our model through the use of a tensor-valued conductivity coefficient $K_0$.
The nonlinear reaction terms appearing in the system account for the presence of ionic currents across the cell membrane, modeling the peculiar nonlinear evolution of the voltage, characterized by the propagation of an initial pulse, a plateau phase, and a slow repolarization (see \cite{book:sundes-lines}). Furthermore, $w$ is the so-called gating variable, which represents the number of open channels per unit area of the cellular membrane and thus regulates the transmembrane currents. Finally, $D$ models the infarcted scar and $u_0$ the initial activation of the tissue, arising from the propagation of the electrical impulse in the cardiac conduction system.
Our main result concerns the determination of the cavity $D$ from one measurement of the potential $u$ on $\Sigma\times (0, T)$. For the forward problem, we prove the existence, and global uniqueness of classical solutions, and derive key uniform bounds via the construction of lower and upper solutions. We then use the properties of solutions to derive the uniqueness of the solution to the inverse problem. 
To accomplish this we use the structure of the coupled parabolic system and the uniform boundedness of its solutions showing that the transmembrane potential $u$ is the solution of a linear parabolic integro-differential equation. As a byproduct of the results obtained in \cite{book:escvess} and \cite{art:Vessella2003} a Three Cylinder Inequality is obtained for this class of equations. This guarantees the unique continuation property needed to prove the uniqueness of the solution to the inverse problem.

The plan of the paper is the following: in Section 2 we discuss the well-posedness of the forward problem. In Section 3 we first present the preliminary results needed to prove the main uniqueness result (Theorem 3.1).

\section{Analysis of the direct problem}
 \label{direct}

We consider the initial-boundary value problem
\begin{equation}
\left\{
\begin{aligned}
\partial_t u - div(K_0\nabla u) + f(u,w) &= 0 \qquad &\text{in } \Omega_D\times (0,T), \\
K_0 \nabla u \cdot \normal &= 0 \qquad &\text{on }  \partial\Omega_D\times (0,T), \\
\partial_t w + g(u,w) &= 0 \qquad &\text{in } \Omega_D\times (0,T) , \\
u(\cdot,0) = u_0 \qquad w(\cdot,0) &= w_0 \qquad &\text{in }  \Omega_D,
\end{aligned}
\right.
\label{probcav}
\end{equation}
where $\Omega_D=\Omega\backslash D$,  $D\subset\Omega$, $\nu$ is the outward unit normal vector to the boundary $\partial\Omega_D$, $K_0$ the conductivity tensor and the nonlinearities $f$ and $g$ are of the kind mentioned in the introduction i.e. (\ref{eq:AP}), (\ref{eq:FHN}), or (\ref{eq:RMC}).




In this section, we discuss the well-posedness of problem \eqref{probcav} in the case of the aforementioned nonlinearities $f$ and $g$.
First, let us introduce some preliminary notation, definitions, and our main assumptions.\\
Let $B'_{r_0}$ be the open ball  of radius $r_0$ in $\mathbb{R}^{d-1}$ centered at the origin and denote by $Q_{r_0,2M_0}=B'_{r_0}\times [-2M_0,2M_0]\subset \R^d$.

\begin{definition}
  \label{def:2.1} (${C}^{k+\alpha}$ regularity)
Let $E$ be a domain in ${\R}^{d}$. 
Given $k$,
$\alpha$, $k\in \N$, $0<\alpha\leq 1$, we say that $ \partial E$
is of \textit{class ${C}^{k+\alpha}$ with
constants $r_{0}$, $M_{0}>0$}, if, for any $P \in \partial E$, there
exists a rigid transformation of coordinates under which we have
$P=0$ and
\begin{equation*}
  E \cap B_{r_{0}}(0)=\{(x',x_d)\in Q_{r_0,2M_0} \quad | \quad
x_{d}>\varphi(x')
  \},
\end{equation*}


where $\varphi$ is a ${C}^{k+\alpha}$ function on $B'_{r_{0}}$
satisfying
\begin{equation*}
\varphi(0)=0,
\end{equation*}
\begin{equation*}
\nabla \varphi (0)=0, \quad \hbox{when } k \geq 1,
\end{equation*}
\begin{equation*}
\|\varphi\|_{{C}^{k+\alpha}(B'_{r_{0}}(0))} \leq M_{0}r_{0}.
\end{equation*}

\medskip
\noindent When $k=0$, $\alpha=1$, we also say that $E$ is of
\textit{Lipschitz class with constants $r_{0}$, $M_0$}.
\end{definition}

\begin{assumption}\label{as:1}

\item $\Omega \subset \R^d$ bounded domain, $d=2,3$, and $\partial \Omega \in C^{2+\alpha}$, $0< \alpha < 1$
\end{assumption}
 \begin{assumption}\label{as:2a}
$K_0(x)$ is a symmetric matrix  satisfying the boundedness and ellipticity conditions 
\begin{equation}
\label{ellipcond}
0<\lambda^{-1}\|\mathbf{\xi}\|^2\leq\xi^T K_0(x)\xi\leq\lambda\|\mathbf{\xi}\|^2, \ \\\ \forall\xi\in\R^d, \forall x\in\Omega
\end{equation}
where $\lambda>1$.
\end{assumption}
\begin{assumption}\label{as:2b}
$K_0$ is $C^{1+\alpha}(\overline{\Omega})$
\end{assumption}
\begin{assumption}\label{as:3}	
 
 $D\subset\Omega, \partial D\in C^{2,\alpha}$
  and
	\begin{equation}
	 dist(D,\partial\Omega) \geq d_0 >0.
\end{equation}
\end{assumption}
\begin{assumption}\label{as:4}
	$u_0 \in C^{2+\alpha}(\overline{\Omega})$ $(0< \alpha < 1)$, $w_0 \in C^{\alpha}(\overline{\Omega})$, $K_0 \nabla u_0 \cdot \normal = 0$ on $\partial\Omega$,
with $|K_0\nu\cdot\nu|>0 $ on $\partial\Omega$. 
 Furthermore, $(u_0,w_0) \in \mathrm{S}$ where 
 $\mathrm{S}:=[0,1+a]\times[0,\frac{A(1+a)^2}{4}]$ for the Aliev-Panfilov and Rogers-McCulloch models and for the FitzHugh-Nagumo model  $S:=[-m,m]\times [-m/\gamma,m/\gamma]$ with $m$ such that 
 \[
    \begin{cases}
    m\ge K_+,\quad \mathrm{if}\quad a\le (\gamma A)^{-1};\\
   m\in (0,K_-]\cup [K_+,+\infty) \quad \mathrm{if}\quad a> (\gamma A)^{-1}  
\end{cases}
\]
and $K_{\pm}=\frac{1}{2}\big [ (a+1)\pm \sqrt{(a+1)^2+4((\gamma A)^{-1}-a)}.$
\end{assumption}
We now state the main results regarding the well-posedness of problem \eqref{probcav}.
The proofs of the subsequent results are based on a fixed point argument that leads to a local in-time existence and uniqueness result for classical solutions. Then the properties of the nonlinearities allow the construction of constant upper and lower solutions implying uniform boundness of solutions and global existence in time. This approach is an adaptation of that given in \cite{book:pao} (Chapter 8, Sections 9 and 11). Hence, we will give here a sketch of the proofs, highlighting the differences with respect to the treatment in \cite{book:pao}.

In the following we use the notations: 
\begin{equation}
Q_T:=\Omega_D\times (0,T)\,;\quad\quad S_T:=\partial\Omega_D\times (0,T)\,.
\end{equation}
We assume for the moment that the non linear terms in \eqref{probcav} satisfies a global Lipschitz condition with respect to $(u,w)$, that is
\begin{equation}
\vspace{-0.25cm}
|f(t,v,\w)-f(t,v',\w')|\le M_1\big ( |v-v'|+|\w-\w'|\big )\,,
\nonumber
\end{equation}
\begin{equation}
|g(t,v,\w)-g(t,v',\w')|\le M_2\big ( |v-v'|+|\w-\w'|\big )\,,
\label{lipmod}
\end{equation}
for some positive $M_1$, $M_2$ and for any $(v,\w)\in\R^2$, $t\in\R$.

\begin{theorem}
Let Assumptions \ref{as:1}-\ref{as:4} and \eqref{lipmod} hold. Then, problem \eqref{probcav} admits a unique classical solution $(u,w)$, namely $u\in C^{2+\alpha,1+\alpha/2}(\overline{{Q}_T})$,
$w\in C^{\alpha,1+\alpha/2}(\overline{{Q}_T})$. 
\label{th:1}
\end{theorem}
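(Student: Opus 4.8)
The plan is to establish local-in-time existence and uniqueness by a contraction mapping argument that exploits the block structure of the system, and then to propagate the solution to the whole interval $[0,T]$ using the uniformity coming from the global Lipschitz bound \eqref{lipmod}. The key structural observation is that the third equation in \eqref{probcav} is, at each fixed $x\in\Omega_D$, an ordinary differential equation in time for $w$ with no spatial coupling, while the first equation is a linear uniformly parabolic equation for $u$ once the reaction term $f(u,w)$ is frozen. I would therefore decouple the problem into an ODE solve for $w$ and a linear parabolic conormal Neumann solve for $u$, and iterate.

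Concretely, fix a small $T^*\in(0,T]$ and work on $\Omega_D\times(0,T^*)$. On a closed ball of the parabolic Hölder space $C^{\alpha,\alpha/2}(\overline{\Omega_D\times(0,T^*)})$ centered at $u_0$, define a map $\mathcal S\colon \bar u\mapsto u$ as follows. First, given $\bar u$, solve the scalar ODE $\partial_t w=-g(\bar u,w)$, $w(\cdot,0)=w_0$, pointwise in $x$; by the Lipschitz condition on $g$ in \eqref{lipmod} this has a unique solution $w=w[\bar u]$ by Banach's fixed point theorem for each $x$, and the Lipschitz dependence on $\bar u$ and on $x$ shows that $w[\bar u]$ inherits the $C^\alpha$ spatial regularity of $w_0$ and $\bar u$. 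Second, solve the linear problem $\partial_t u-\mathrm{div}(K_0\nabla u)=-f(\bar u,w[\bar u])$ with $K_0\nabla u\cdot\nu=0$ on the lateral boundary and $u(\cdot,0)=u_0$. Assumptions \ref{as:1}, \ref{as:2a}, \ref{as:2b}, \ref{as:3} (domain of class $C^{2+\alpha}$, $K_0\in C^{1+\alpha}(\overline\Omega)$ and elliptic) together with the compatibility condition $K_0\nabla u_0\cdot\nu=0$ built into Assumption \ref{as:4} allow one to invoke the classical linear parabolic Schauder theory (Ladyzhenskaya--Solonnikov--Ural'tseva), which yields a unique $u\in C^{2+\alpha,1+\alpha/2}(\overline{\Omega_D\times(0,T^*)})$ with norm controlled by the $C^{\alpha,\alpha/2}$ norm of the source. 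Using the Lipschitz bounds on $f$ and $g$, the difference $\mathcal S(\bar u_1)-\mathcal S(\bar u_2)$ is estimated by a constant times a positive power of $T^*$ times $\|\bar u_1-\bar u_2\|$; choosing $T^*$ small makes $\mathcal S$ a contraction in the weaker space $C^{\alpha,\alpha/2}$, while its fixed point automatically enjoys the full $C^{2+\alpha,1+\alpha/2}$ regularity by Schauder. The asserted regularity $w\in C^{\alpha,1+\alpha/2}$ then follows since $\partial_t w=-g(u,w)$ is $C^{\alpha/2}$ in time and $C^\alpha$ in space once $u$ is known.

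To pass from local to global existence on $[0,T]$, I would exploit that the Lipschitz constants $M_1,M_2$ in \eqref{lipmod} are global, so the length $T^*$ produced by the contraction estimate can be chosen independent of the starting time. One then restarts the argument on $[T^*,2T^*]$ with initial data $(u(\cdot,T^*),w(\cdot,T^*))$, which retains the required regularity and compatibility, and after finitely many steps covers $[0,T]$; alternatively, the same conclusion follows in one step by running the contraction in an exponentially weighted (Bielecki-type) norm. Global uniqueness is then immediate from local uniqueness together with a Grönwall estimate on the difference of two solutions, again using \eqref{lipmod}. The whole scheme is the parabolic-system analogue of \cite{book:pao}, the main deviations being the tensorial, $x$-dependent conductivity $K_0$ and the conormal boundary condition $K_0\nabla u\cdot\nu=0$.

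The step I expect to be the main obstacle is the careful verification that the composite map lands in, and contracts on, the correct Hölder space. This requires checking that the frozen reaction term $f(\bar u,w[\bar u])$ is genuinely of class $C^{\alpha,\alpha/2}$ — in particular that $w[\bar u]$ has the right joint space--time Hölder regularity, which hinges on controlling the $x$-dependence of the ODE solution operator by the Lipschitz data — and ensuring that the Schauder constants and the compatibility conditions at the corner $t=0$ on $\partial\Omega_D$ are consistent with the claimed $1+\alpha/2$ regularity up to the initial time. The conormal boundary condition with a nonconstant $K_0$ makes the compatibility bookkeeping and the Schauder setup the most delicate part; the ODE solve and the continuation in time are comparatively routine once the global Lipschitz bound is in hand.
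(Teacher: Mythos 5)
Your overall architecture is sound but genuinely different from the paper's. You prove local existence on a short interval $[0,T^*]$ by decoupling the system (solve the ODE for $w$ pointwise in $x$, then the linear conormal problem for $u$ by Schauder theory) and then continue in time, using the fact that the global Lipschitz constants make the step length uniform. The paper never localizes in time: it substitutes $v=e^{-\kappa t}u$, $\mathrm{w}=e^{-\kappa t}w$ (this is precisely the Bielecki-type weighting you mention as an aside), inverts the operator $Av=\partial_t v-\mathrm{div}(K_0\nabla v)+\kappa v$ on $C^{\alpha,\alpha/2}$ by linear parabolic theory, rewrites the $\mathrm{w}$-equation as a Volterra integral equation, and proves the Pao-type maximum-principle estimate $|Av-Av'|_0\ge\kappa\,|v-v'|_0$ (this is where the conormal condition and the hypothesis $|K_0\nu\cdot\nu|>0$ of Assumption \ref{as:4} enter), so that for $\kappa>\max\{M_1,M_2\}$ the joint map for $(v,\mathrm{w})$ is a contraction on all of $[0,T]$ at once, in the sup norm. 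What the paper's route buys is that there is no restarting step, hence no need to verify compatibility of the data at times $kT^*$ nor that the solutions glued across the junctions remain $C^{2+\alpha,1+\alpha/2}$ there; your sketch passes over this gluing point, which is standard but does require an argument (e.g. re-solving the linear problem on $[0,2T^*]$ with the glued source and invoking uniqueness).

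There is, however, one step of yours that fails as stated: the choice of the contraction metric. Theorem \ref{th:1} assumes only the global Lipschitz condition \eqref{lipmod}; $f$ and $g$ need not be $C^1$ (and in the paper's later use of this theorem they are truncations, so this generality matters). For merely Lipschitz $f$, the superposition map $\bar u\mapsto f(\bar u,\cdot)$ is bounded from $C^{\alpha,\alpha/2}$ into $C^{\alpha,\alpha/2}$, but it is \emph{not} Lipschitz with respect to the H\"older norm: in one space dimension take $f(s)=|s|$, $u_1(x)=x$, $u_2(x)=x+\epsilon$; then $\|u_1-u_2\|_{C^{\alpha}}=\epsilon$ while the difference $|x|-|x+\epsilon|$ jumps by $2\epsilon$ over an interval of length $\epsilon$, so its $C^{\alpha}$ seminorm is at least $2\epsilon^{1-\alpha}$, and the Lipschitz ratio $2\epsilon^{-\alpha}$ blows up as $\epsilon\to0$. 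Consequently your claimed bound
\begin{equation*}
\|\mathcal S(\bar u_1)-\mathcal S(\bar u_2)\|_{C^{\alpha,\alpha/2}}\le C\,(T^*)^{\sigma}\,\|\bar u_1-\bar u_2\|_{C^{\alpha,\alpha/2}}
\end{equation*}
is not justified, and Banach's theorem cannot be applied in that norm; no choice of small $T^*$ repairs this, since the counterexample degrades the constant for fixed $T^*$. The repair is exactly what the paper does: run the contraction in the sup norm $|\cdot|_0$, restricted to a bounded ball of $C^{\alpha,\alpha/2}$ (or of $C^{2+\alpha,1+\alpha/2}$). Such a ball is complete for the sup metric because H\"older seminorms are lower semicontinuous under uniform convergence; condition \eqref{lipmod} does give $|f(\bar u_1,w[\bar u_1])-f(\bar u_2,w[\bar u_2])|_0\le C|\bar u_1-\bar u_2|_0$; and the maximum principle (or the paper's estimate \eqref{eq:estA}) converts this into a sup-norm contraction, with the H\"older regularity of the fixed point recovered a posteriori by Schauder theory. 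With that single change your local-plus-continuation proof goes through.
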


\begin{proof}
Let us define the new variables $v:=e^{-\kappa t}u$, $\mathrm{w}:=e^{-\kappa t}w$, where $\kappa>0$ is a constant to be chosen. Then, problem \eqref{probcav} becomes
\begin{equation}
\left\{
\begin{aligned}
\partial_t v - div(K_0\nabla v)+\kappa v  &= f^*(t,v,\w) \qquad &\text{in } Q_T, \\
K_0 \nabla v \cdot \normal &= 0 \qquad &\text{on }  S_T, \\
\partial_t \w + \kappa\w&= g^*(t,v,\w) \qquad &\text{in } Q_T , \\
v(\cdot,0) = u_0\,, \qquad \w(\cdot,0) &= w_0 \qquad &\text{in }  \Omega_D,
\end{aligned}
\right.
\label{probcavmod}
\end{equation}
where 
\begin{equation}
\vspace{-0.25cm}
f^*(t,v,\w) = -e^{-\kappa t}\,f(e^{\kappa t}v,e^{\kappa t}\w)\qquad 
g^*(t,v,\w) = -e^{-\kappa t}\,g(e^{\kappa t}v,e^{\kappa t}\w)\,.
\label{fgmod}
\end{equation}
\smallskip
Note that the same global Lipschitz condition \eqref{lipmod} holds for $f^*$ and $g^*$
with constants independent of $t$; hence, in the following we will drop the explicit dependence on $t$ of $f^*$ and $g^*$.
We will reformulate the problem \eqref{probcavmod} as a fixed point equation in the Banach space of the pairs $(v,\w)$, $v$, $\w$ $\in C(\overline{Q_T})$, provided with the sup norm.
Let us first define the operator
\begin{equation}
A\,:\, \mathcal{D}(A)\rightarrow C({Q_T})\,;\qquad
A v:= \partial_t v - div(K_0\nabla v)+\kappa v\,,
\label{defA}
\end{equation}
where 

\begin{equation}
\mathcal{D}(A):=
\Big\{v\in C^{2+\alpha,1+\alpha/2}(\overline{Q_T}) \,,
\,K_0\nabla v\cdot\nu\big |_{S_T}=0\,, v(\cdot,0)=u_0\,\Big\}.
\label{defdomA}
\end{equation}

We further observe that the problem for $\w$ in \eqref{probcavmod} is equivalent to the integral equation
\begin{equation}
\w(x,t)=e^{-\kappa t} w_0(x)+\int_0^t e^{-\kappa (t-\tau)}g^*\big (v(x,\tau),\w(x,\tau)\big )d\tau\,.
\label{inteqw}
\end{equation}
Hence, by denoting with $G(v,\w)$ the right hand side of \eqref{inteqw}, we can write \eqref{probcavmod} in the operator form
\begin{equation}
\left\{
\begin{aligned}
A v  &= f^*(v,\w)\,, \\
\w &= G(v,\w)  \,. \\
\end{aligned}
\right.
\label{probcavope}
\end{equation}

Now, in order to get the fixed point equation we discuss the invertibility of the operator $A$  following \cite{book:pao}. To this aim, by exploiting the regularity of $K_0$, we write the operator $A$ in non-divergence form.  
Then by known results in the theory of linear parabolic equations, see for example Theorem 5.1.17 in \cite{lunardi},  we can ensure the existence of a unique solution $v\in \mathcal{D}(A)$ such that $A v=h$ provided $h\in C^{\alpha,\alpha/2}(\overline{Q}_T)$.

Then the operator $A^{-1}$ is well defined on the subspace 
$X=C ^{\alpha,\alpha/2}(\overline{Q_T})$; 
furthermore, it is easily verified that  the operators $f^*$ and $G$ map $X\times X$ into $X$. Hence we finally get

\begin{equation}
\left\{
\begin{aligned}
v  &= A^{-1}f^*(v,\w)\,, \\
\w &= G(v,\w)  \,. \\
\end{aligned}
\right.
\label{probcavfix}
\end{equation}
with $(v,\w)\in X\times X$.

By denoting with $|\cdot |_0$ the sup norm in $C(\overline{Q_T})$,
we can now follow the same pattern as in the proof of Lemma 9.1 in \cite{book:pao} Chapter 8, Section 9, using ellipticity of $A$ and the properties of the functions in $\mathcal{D}(A)$ at the extremum points to get the estimate
\begin{equation}
|Av-Av'|_0\ge \kappa |v-v'|_0\,\qquad v\,,v'\,, \in \mathcal{D}(A)\,.
\label{eq:estA}
\end{equation} 

\smallskip
The only delicate point is that in \cite{book:pao} it is assumed a Neumann (or Dirichlet) boundary condition on $S_T$, which is different from the condition in \eqref{defdomA}. Nevertheless, it can shown that the proof still works by the restriction on the conormal derivative in assumption \ref{as:4}.

By \eqref{probcavfix}, \eqref{eq:estA}, we now have
\begin{equation}
|A^{-1}f^*(v,\w)-A^{-1}f^*(v',\w')|_0\le \frac{1}{\kappa} |f^*(v,\w)-f^*(v',\w')|_0
\le \frac{M_1}{\kappa}(|v-v'|_0+|\w-\w'|_0)\,.
\label{eq:estinvAf}
\end{equation}
Finally, by the definition of $G$ (see \eqref{inteqw}) it follows readily
\begin{equation}
\begin{aligned}
|G(v,\w)-G(v',\w')|_0\le \int_0^t e^{-\kappa (t-\tau)}
\big |g^*\big (v(x,\tau),\w(x,\tau)-g^*\big (v'(x,\tau),\w'(x,\tau)\big |\,d\tau \\
\le \frac{M_2}{\kappa}(|v-v'|_0+|\w-\w'|_0)\,.
\end{aligned}
\label{eq:estG}
\end{equation}

By choosing $\kappa>\max\{M_1,M_2\}$ the operator defined by the right hand side of \eqref{probcavfix} is a contraction in $X\times X$ with respect to the sup norm, so that there is a unique fixed point. Then, by exploiting known regularities results for parabolic boundary value problems (see e.g. \cite{lunardi}, section $5.1.2$) it can be shown that the solution to problem \eqref{probcavmod}, as well as to \eqref{probcav},
has the required regularity properties. 
\end{proof}

\begin{remark}
By the previous theorem, the solution $(u,w)$ to problem \eqref{probcav} is the limit of the sequence $(u^{(k)},w^{(k)})$ defined by the iteration process
\begin{equation}
\left\{
\begin{aligned}
\partial_t u^{(k)} - div(K_0\nabla u^{(k)})  &= -f(u^{(k-1)},w^{(k-1)}) \qquad &\text{in } Q_T \\
K_0 \nabla u^{(k)} \cdot \normal &= 0 \qquad &\text{on }  S_T, \\
\partial_t w^{(k)} &= - g(u^{(k-1)},w^{(k-1)})  \qquad &\text{in } Q_T , \\
u^{(k)}(\cdot,0) = u_0 \qquad w^{(k)}(\cdot,0) &= w_0 \qquad &\text{in }  \Omega_D,
\end{aligned}
\right.
\label{probcaviter}
\end{equation}
with $k=1,2,...$ and where $(u^{(0)},w^{(0)})$ is any pair in 
$C^{2+\alpha}(\overline{Q_T})\times C^{\alpha}(\overline{Q_T})$.
  \end{remark}

We now need to replace the global Lipschitz condition on the non linear terms with a local one in order to include the cases of the non linearities \eqref{eq:AP}, \eqref{eq:FHN} and \eqref{eq:RMC}.

Preliminarly, we introduce further definitions and results from \cite{book:pao}, Chapter 8, Section 9.
\begin{definition}
  \label{def:gculs}
Two pairs of functions $(\overline u, \overline w)$, $(\underline u,\underline w)$, each one in 
$\mathcal{C}^{2+\alpha,1+\alpha/2}(\overline{Q_T)}\times \mathcal{C}^{\alpha,1+\alpha/2}(Q_T)$, are called generalized coupled upper and lower solutions of \eqref{probcav} if $\overline u\ge \underline u$, $\overline w\ge \underline w$ and if they satisfy
\medskip
\begin{equation}
\left\{
\begin{aligned}
\partial_t \overline u - div(K_0\nabla \overline u) + f(\overline u,z) &\ge 0 \qquad &\text{in } Q_T, 
\quad &\forall z\in [\underline w,\overline w],\\
K_0 \nabla \overline u \cdot \normal &\ge 0 \qquad &\text{on }  S_T, \\
\partial_t \overline w + g(v,\overline w) &\ge 0 \qquad   &\text{in } Q_T, 
\quad &\forall  v\in [\underline u,\overline u],\\
\overline u(\cdot,0) \ge u_0 \qquad \overline w(\cdot,0) &\ge w_0 \qquad &\text{in }  \Omega_D,
\end{aligned}
\right.
\label{upperprob}
\end{equation}
\medskip
\begin{equation}
\left\{
\begin{aligned}
\partial_t \underline u - div(K_0\nabla \underline u) + f(\underline u,z) &\le 0 \qquad &\text{in } Q_T, 
\quad &\forall z\in [\underline w,\overline w],\\
K_0 \nabla \underline u \cdot \normal &\le 0 \qquad &\text{on }  S_T, \\
\partial_t \underline w + g(v,\underline w) &\le 0 \qquad   &\text{in } Q_T, 
\quad &\forall  v\in [\underline u,\overline u],\\
\underline u(\cdot,0) \le u_0 \qquad \underline w(\cdot,0) &\le w_0 \qquad &\text{in }  \Omega_D,
\end{aligned}
\right.
\label{lowerprob}
\end{equation}
\end{definition}

\medskip
We now suppose that for every $R>0$ there are constants $M_1=M_1(R)$, $M_2=M_2(R)$ such that
\begin{equation}
\vspace{-0.25cm}
|f(v,\w)-f(v',\w')|\le M_1\big ( |v-v'|+|\w-\w'|\big )\,,
\nonumber
\end{equation}
\begin{equation}
|g(v,\w)-g(v',\w')|\le M_2\big ( |v-v'|+|\w-\w'|\big )\,,
\label{liploc}
\end{equation}
whenever $|v|+|w|\le R$, $|v'|+|w'|\le R$. 

\medskip
Then we can prove (see \cite{book:pao}, chapter 8, theorem 9.3)

\begin{theorem}
\label{theobounds}
Let  $(\overline u,\overline w)$, $(\underline u,\underline w)$ be generalized coupled upper and lower solutions to \eqref{probcav}, where $f$, $g$ satisfy \eqref{liploc}. Then the problem \eqref{probcav} has a unique solution $(u,w)$ with $\underline u\le u\le \overline u$, $\underline w\le w\le\overline w$.  
\end{theorem}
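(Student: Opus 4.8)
The plan is to obtain global-in-time existence and the stated ordering of solutions by combining the local theory of Theorem~\ref{th:1} with the comparison structure encoded in Definition~\ref{def:gculs}. The key device is the monotone iteration scheme of \cite{book:pao}. First I would introduce, for a fixed radius $R$ chosen so that the sector $[\underline u,\overline u]\times[\underline w,\overline w]$ is contained in the ball of radius $R$, a truncated (or globally Lipschitz) modification $\tilde f$, $\tilde g$ of the nonlinearities that agrees with $f$, $g$ on this sector and satisfies the global condition \eqref{lipmod} with constants $M_1(R)$, $M_2(R)$ from \eqref{liploc}. On the sector the modified and original problems coincide, so it suffices to solve the modified problem and then verify \emph{a posteriori} that the solution lands in the sector.

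Next I would set up two monotone sequences. Following the iteration \eqref{probcaviter}, but with the shift-trick of adding $\kappa(u^{(k)}-u^{(k-1)})$ on the left (with $\kappa\ge\max\{M_1,M_2\}$ so that $f+\kappa u$ and $g+\kappa w$ are monotone nondecreasing in the relevant argument), I would define the upper sequence $(\overline u^{(k)},\overline w^{(k)})$ started from the upper solution $(\overline u,\overline w)$ and the lower sequence $(\underline u^{(k)},\underline w^{(k)})$ started from $(\underline u,\underline w)$. Each iterate solves a \emph{linear} decoupled problem of exactly the form handled in Theorem~\ref{th:1}, hence exists, is unique, and has the required parabolic regularity $C^{2+\alpha,1+\alpha/2}$ for the $u$-component and $C^{\alpha,1+\alpha/2}$ for the $w$-component. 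The heart of the argument is the chain of inequalities
\begin{equation}
\underline u\le\underline u^{(k)}\le\underline u^{(k+1)}\le\overline u^{(k+1)}\le\overline u^{(k)}\le\overline u\,,
\nonumber
\end{equation}
and similarly for $w$, proved by induction using the parabolic maximum principle on the difference of consecutive iterates, where the sign of the source term is controlled precisely by the monotonicity built in through $\kappa$ and by the upper/lower solution inequalities \eqref{upperprob}, \eqref{lowerprob}. The conormal boundary condition requires the same remark as in Theorem~\ref{th:1}: the restriction $|K_0\nu\cdot\nu|>0$ in Assumption~\ref{as:4} lets the maximum-principle argument for the conormal problem replace the Neumann case treated in \cite{book:pao}.

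I would then pass to the limit. The monotone bounded sequences converge pointwise to limits $u^*\le u_*$... more precisely the upper iterates decrease to a limit $\overline u^*$ and the lower iterates increase to $\underline u^*$ with $\underline u^*\le\overline u^*$, both lying in the sector; uniform Schauder estimates on the iterates (uniform because the coefficients and the Lipschitz data are $R$-uniform) give compactness in $C^{2+\beta,1+\beta/2}$ for $\beta<\alpha$, so the limits are genuine classical solutions of the modified problem, and since they stay in the sector they solve the original problem \eqref{probcav}. Finally, uniqueness within the sector follows from Theorem~\ref{th:1}: on the sector the nonlinearities satisfy the global Lipschitz bound, so the fixed-point contraction argument applies verbatim and forces $\underline u^*=\overline u^*=u$, which simultaneously yields $\underline u\le u\le\overline u$ and $\underline w\le w\le\overline w$.

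The main obstacle I anticipate is establishing the comparison/ordering inequalities rigorously for the \emph{coupled} system under the conormal boundary condition. The coupling means one cannot apply a scalar maximum principle directly; one must exploit the quasi-monotone (off-diagonal sign) structure of $f$ and $g$ and the decoupling inherent in the iteration \eqref{probcaviter}, so that at each step the $u$- and $w$-equations are scalar and the induction closes. Verifying that the $w$-equation, which is merely an ODE in $t$ at each spatial point (no diffusion), interacts correctly with the diffusive $u$-equation in maintaining the ordering is the delicate bookkeeping step, and is exactly the place where the adaptation of \cite{book:pao} to the present conormal, tensor-conductivity setting must be checked.
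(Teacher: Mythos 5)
Your outer skeleton (truncate the nonlinearities to a globally Lipschitz pair, iterate linear problems, pass to the limit, conclude uniqueness via the contraction argument of Theorem~\ref{th:1}) matches the paper's proof, but the core of your argument --- the monotone iteration producing the ordered chain $\underline u\le\underline u^{(k)}\le\underline u^{(k+1)}\le\overline u^{(k+1)}\le\overline u^{(k)}\le\overline u$ --- has a genuine gap, and it is not just ``delicate bookkeeping''. The induction that orders consecutive iterates requires control of the off-diagonal coupling: to show, say, that the upper sequence decreases, you must estimate $f(\overline u^{(k)},\overline w^{(k)})-f(\overline u^{(k-1)},\overline w^{(k-1)})$, and the portion of this difference coming from the $w$-arguments, namely $f(\overline u^{(k)},\overline w^{(k)})-f(\overline u^{(k)},\overline w^{(k-1)})$, has no usable sign unless $f$ is monotone in $w$ (and likewise $g$ in $u$). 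The $\kappa$-shift cannot repair this: adding $\kappa u^{(k)}$ to the left-hand side makes $\kappa u-f(u,z)$ nondecreasing in the \emph{diagonal} variable $u$ for frozen $z$, but does nothing to the dependence on $z$. Theorem~\ref{theobounds} assumes only the local Lipschitz condition \eqref{liploc} together with the existence of generalized coupled upper and lower solutions; no quasi-monotonicity is assumed, and it genuinely fails for the Aliev--Panfilov nonlinearity \eqref{eq:AP}, where $\partial g/\partial u=\epsilon A(2u-1-a)$ changes sign on $[0,1+a]$, so the system is not (mixed) quasi-monotone on the relevant sector: your two sequences need not be ordered, and the induction does not close.

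The paper's proof avoids monotone sequences altogether, and the mechanism explains why Definition~\ref{def:gculs} demands the inequalities \eqref{upperprob}--\eqref{lowerprob} for \emph{every} $z\in[\underline w,\overline w]$ and \emph{every} $v\in[\underline u,\overline u]$. Given $(v,z)$ in the sector $\mathcal{S}$, one compares the solution $(u^*,w^*)$ of the linear problem \eqref{linprob} with $\overline u$ by invoking the upper-solution inequality with the \emph{same} $z$ that appears in the right-hand side of \eqref{linprob}; the resulting difference $f(v,z)-f(\overline u,z)$ involves only the $u$-arguments, so the diagonal Lipschitz bound $f(v,z)-f(\overline u,z)\ge -M_1(\overline u-v)$ suffices, and the off-diagonal coupling never enters with an uncontrolled sign. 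This yields invariance of $\mathcal{S}$ under the (non-monotone) Picard iteration \eqref{probitermod}: every iterate stays in the sector, but no monotonicity in $k$ is claimed or needed. Convergence and uniqueness then follow from the contraction argument of Theorem~\ref{th:1} applied to the truncated nonlinearities, and the sector bounds pass to the limit because all iterates lie in $\mathcal{S}$. If you replace your ordering induction by this invariance argument --- proving $\underline u\le u^{(k)}\le\overline u$ and $\underline w\le w^{(k)}\le\overline w$ for each $k$ rather than monotonicity in $k$ --- the rest of your proposal goes through, and you no longer need the Schauder compactness step, since the contraction already provides uniform convergence and the regularity of the limit follows from the linear parabolic theory as in Theorem~\ref{th:1}.
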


\begin{proof}
Let us define
\begin{equation}
 \mathcal{S}:=\Big \{(v,z)\in \mathcal{C}^{2+\alpha,1+\alpha/2}(\overline{Q_T)}\times \mathcal{C}^{\alpha,1+\alpha/2}(Q_T)\,:\,\underline u\le v\le \overline u,\quad \underline w\le z\le\overline w\Big\}
\end{equation}

For any pair $(v,z)\in \mathcal{S}$ let $(u^*, w^*)$ be the solution of the \emph{linear} problem
\begin{equation}
\left\{
\begin{aligned}
\partial_t  u^* - div(K_0\nabla  u^*) + k_1 u^* &=M_1 v -f(v,z)\qquad &\text{in } Q_T \\
K_0 \nabla  u^* \cdot \normal &= 0 \qquad &\text{on }  S_T, \\
\partial_t  w^* + M_2 w^* &= M_2 z-g(v,z) \qquad &\text{in } Q_T , \\
u^*(\cdot,0) = u_0 \qquad w^*(\cdot,0) &= w_0 \qquad &\text{in }  \Omega_D,
\end{aligned}
\right.
\label{linprob}
\end{equation}
where $M_1$, $M_2$ are the Lipschitz constants in \eqref{liploc} and we assume $R$ larger than 
any $|v|+|z|$ for $(v,z)\in \mathcal{S}$. Let us point out that this value of $R$ only depends on $(\overline u,\overline w)$ and $(\underline u,\underline w)$. Let $U:=\overline u- u^*$; by \eqref{upperprob} and \eqref{linprob} we get
\begin{equation}
 \partial_t U-div(K_0\nabla U)+M_1 U\ge -f(\overline u,z)+M_1(\overline u-v)+f(v,z),
\end{equation}
Now, by the Lipschitz condition and recalling that $v\le \overline u$, we get
$$f(v,z)-f(\overline  u,z)\ge -M_1(\overline  u-v).$$

Hence,
\begin{equation}
 \partial_t U-div(K_0\nabla U)+M_1 U\ge 0, 
\end{equation}
together with the boundary condition $K_0 \nabla U \cdot \normal \ge 0$ and the initial condition
$U(x,0)\ge 0$.

Thus, by the maximum principle for parabolic operators $U\ge 0$ in $\overline{Q_T}$, which yields
$u^*\le \overline  u$. An analogous argument with the lower solution gives $u^*\ge \underline u$.

Similarly, by defining $W=\overline  w- w^*$ we readily find that
\begin{equation}
 \partial_t W+M_2 W\ge 0, 
\end{equation}
together with the initial condition $W(x,0)\ge 0$, so that we still have $W\ge 0$ and $w^*\le \overline  w$. Finally, we get $w^*\ge \underline w$ by considering the lower solution.

Now, by suitably modifying the functions $f$, $g$ outside $[\underline u, \overline u ]
\times [\underline w, \overline{w} ]$, we can still assume that they are globally Lipschitz; then, 
for any $(u^{(0)},w^{(0)})\in \mathcal{S}$, consider the sequence defined by
\begin{equation}
\left\{
\begin{aligned}
\partial_t u^{(k)} - div(K_0\nabla u^{(k)}) + M_1 u^{(k)}  &=M_1 u^{(k-1)}  -f(u^{(k-1)},w^{(k-1)}) \qquad &\text{in } Q_T \\
K_0 \nabla u^{(k)} \cdot \normal &= 0 \qquad &\text{on }  S_T, \\
\partial_t w^{(k)} + M_2 w^{(k)}&=M_2 w^{(k-1)} -g(u^{(k-1)},w^{(k-1)})  \qquad &\text{in } Q_T , \\
u^{(k)}(\cdot,0) = u_0 \qquad w^{(k)}(\cdot,0) &= w_0 \qquad &\text{in }  \Omega_D,
\end{aligned}
\right.
\label{probitermod}
\end{equation}
with $k=1,2,...$ . By the previous bounds, every pair $(u^{(k)}, w^{(k)})$ belongs to $\mathcal{S}$. Hence, we can still apply the arguments of theorem \ref{th:1} with the global Lipschitz condition to conclude (as for the sequence \eqref{probcaviter}) that the sequence $(u^{(k)}, w^{(k)})$ converges uniformly to a unique solution $(u,w)$ of the problem
\ref{probcav} and that $\underline u\le u\le \overline u$, $\underline w\le w\le\overline w$.
\end{proof}

In order to apply theorem \ref{theobounds} to our models we need to find suitable upper and lower solutions for $f$ and $g$ as in \eqref{eq:AP}, \eqref{eq:FHN}, \eqref{eq:RMC}. Actually, it is convenient to look for \emph{constants} upper and lower solutions:
\begin{lemma}
For each one of the nonlinear terms \eqref{eq:AP}, \eqref{eq:FHN}, \eqref{eq:RMC} there are constants upper and lower solutions.\\
Precisely, in the Aliev-Panfilov and Rogers-McCulloch models one can take
$\underline u=0$, $\underline w=0$, while 
\begin{equation}
    \overline{u}=1+a\,,\quad\quad \overline{w}\ge A(1+a)^2/4
    \label{APbds}
\end{equation}
in the Aliev -Panfilov model and 
\begin{equation}
   \overline{u}\in (0,a)\cup(1,+\infty)\,,\quad\quad \overline{w}\ge \overline{u}/\gamma,
    \label{RMCbds}
\end{equation}
in the Rogers-McCulloch model.\\ 
Finally, in the Fitzhugh-Nagumo model one can take $\overline{u}=m=-\underline{u}$,
$\overline{w}=m/\gamma=-\underline{w}$, with 
\begin{equation}
    \begin{cases}
    m\ge K_+,\quad \mathrm{if}\quad a\le (\gamma A)^{-1};\\
   m\in (0,K_-]\cup [K_+,+\infty) \quad \mathrm{if}\quad a> (\gamma A)^{-1}  
\end{cases}
\label{ulfhn}
\end{equation}
and $K_{\pm}=\frac{1}{2}\big [ (a+1)\pm \sqrt{(a+1)^2+4((\gamma A)^{-1}-a)}$.
\end{lemma}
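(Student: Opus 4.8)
The plan is to exploit the fact that we are looking for \emph{constant} upper and lower solutions, so that in Definition~\ref{def:gculs} every term carrying a time derivative or a spatial gradient drops out. For constants the conormal expressions $K_0\nabla\overline u\cdot\nu$ and $K_0\nabla\underline u\cdot\nu$ vanish identically, hence satisfy the required sign conditions with equality; the regularity requirement is trivial since constants lie in every H\"older class; and the initial-data inequalities $\underline u\le u_0\le\overline u$, $\underline w\le w_0\le\overline w$ are exactly the statement that $(u_0,w_0)\in\mathrm{S}$ from Assumption~\ref{as:4}, the box $\mathrm{S}$ being contained in $[\underline u,\overline u]\times[\underline w,\overline w]$ for the admissible choices below. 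The entire lemma therefore reduces to the four algebraic inequalities
\[
f(\overline u,z)\ge 0,\quad f(\underline u,z)\le 0\quad(\forall z\in[\underline w,\overline w]),\qquad g(v,\overline w)\ge 0,\quad g(v,\underline w)\le 0\quad(\forall v\in[\underline u,\overline u]),
\]
which I will verify model by model; the FitzHugh--Nagumo case will be the only delicate one.

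For the Aliev--Panfilov model I take $\underline u=\underline w=0$. Then $f(0,z)=0$, and $g(v,0)=\epsilon A\,v(v-1-a)\le 0$ on $[0,1+a]$ because the factors $v\ge0$ and $v-1-a\le0$ have opposite signs, so both lower inequalities hold. With $\overline u=1+a$ one computes $f(1+a,z)=Aa(1+a)+(1+a)z\ge0$ for $z\ge0$, so the only real work is the $g$-inequality: $g(v,\overline w)\ge0$ for all $v\in[0,1+a]$ is equivalent to $\overline w\ge\max_v\bigl(-Av(v-1-a)\bigr)$. Since $v\mapsto v(v-1-a)$ attains its minimum $-(1+a)^2/4$ at $v=(1+a)/2$, this forces exactly $\overline w\ge A(1+a)^2/4$, which is \eqref{APbds}.

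The Rogers--McCulloch model is handled the same way with $\underline u=\underline w=0$. The factorization $f(\overline u,z)=\overline u\,[A(\overline u-a)(\overline u-1)+z]$ shows, using $\overline u>0$ and $z\ge0$, that $f(\overline u,z)\ge0$ is equivalent to $(\overline u-a)(\overline u-1)\ge0$, i.e.\ to $\overline u\in(0,a)\cup(1,+\infty)$; meanwhile $g(v,\overline w)=\epsilon(\gamma\overline w-v)\ge0$ on $[0,\overline u]$ is equivalent to $\overline w\ge\overline u/\gamma$. Together these give \eqref{RMCbds}.

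The main obstacle is the FitzHugh--Nagumo model, where I use the symmetric choice $\overline u=m=-\underline u$, $\overline w=m/\gamma=-\underline w$. The $g$-inequalities are immediate, since $g(v,m/\gamma)=\epsilon(m-v)\ge0$ and $g(v,-m/\gamma)=\epsilon(-m-v)\le0$ on $[-m,m]$. The binding constraint comes from $f$: as $f(m,z)=Am(m-a)(m-1)+z$ is increasing in $z$, requiring $f(m,-m/\gamma)\ge0$ gives $A(m-a)(m-1)\ge1/\gamma$, and the companion lower-solution condition $A(m+a)(m+1)\ge1/\gamma$ is then automatic because $(m+a)(m+1)>(m-a)(m-1)$ for $m>0$. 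The remaining care is purely in analyzing the quadratic $m^2-(a+1)m+(a-(\gamma A)^{-1})\ge0$, whose roots are precisely $K_\pm$: by Vieta the product of the roots equals $a-(\gamma A)^{-1}$ and their sum is $a+1>0$, so when $a\le(\gamma A)^{-1}$ the roots straddle the origin and the admissible set for $m>0$ collapses to $m\ge K_+$, whereas when $a>(\gamma A)^{-1}$ both roots are positive and the quadratic is nonnegative on $(0,K_-]\cup[K_+,+\infty)$. This is exactly the dichotomy \eqref{ulfhn}, which completes the verification.
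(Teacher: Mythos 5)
Your proof is correct and follows essentially the same route as the paper: check the four sign conditions of Definition~\ref{def:gculs} for constant candidates, reducing each model to elementary algebra (the maximum of $Av(1+a-v)$ for Aliev--Panfilov, the factorization $f(\overline u,z)=\overline u[A(\overline u-a)(\overline u-1)+z]$ for Rogers--McCulloch, and the two-sided inequality $-A(m+a)(m+1)+\gamma^{-1}\le 0\le A(m-a)(m-1)-\gamma^{-1}$ for FitzHugh--Nagumo). Your only addition is making explicit the ``elementary calculations'' the paper leaves implicit in the FitzHugh--Nagumo case, namely that the lower-solution inequality follows from the upper one since $(m+a)(m+1)-(m-a)(m-1)=2(a+1)m>0$, and the Vieta analysis of the quadratic giving the dichotomy \eqref{ulfhn}.
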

\begin{proof}
We have to check definition \ref{def:gculs} for the three nonlinearities.\\
If $f(u,w)=Au(u-a)(u-1)+uw$, $g(u,w)=\epsilon(Au(u-1-a)+w)$, we have $f(0,w)=0$, $g(u,0)= \epsilon Au(u-1-a) \le 0$ for any $u\in [0,1+a]$; moreover
$f(1+a,w)=(1+a)(Aa+w)\ge 0$. Finally, $g(u,\overline w)\ge 0$ if $\overline w\ge Au(1+a-u)$
and by an elementary calculation we get
\begin{equation}
\max_{0\le u\le 1+a} Au(1+a-u)=A\frac{(1+a)^2}{4}.
\end{equation}
Hence, in the Aliev-Panfilov model theorem \ref{theobounds} applies if the values $(u_0,w_0)$ of the initial data lie in the rectangle $\mathrm{S}=[0,1+a]\times[0,\frac{A(1+a)^2}{4}]$.

Let us now consider the Rogers-McCulloch model, where $f$ is unchanged and $g(u,w)=\epsilon(\gamma w-u)$. Clearly, we still have $f(0,w)=0$ for every $w$ and $g(u,0)=-\epsilon u\le 0$ for any $u\ge 0$. Now, since

$$f(\overline u,w)=A \overline{u}\big ( (\overline{u}-a)(\overline{u}-1)+w\big )\,,$$
we see that the condition $f(\overline u,w)\ge 0$ for any $w\ge 0$ holds provided that
$(\overline u-a)(\overline u-1)\ge 0$, i.e. $\overline u\in (0,a)\cup (1+\infty)$.
Finally, by choosing $\overline{w}\ge \overline u/\gamma$ we get
$$g(u,\overline{w})=\epsilon(\gamma\overline{w}-u)\ge 0\,\quad\quad \forall\,\,u\in [0,\overline u]\,.$$
Thus, in this model we can take $S=[0,\overline u]\times [0,\overline w]$, with
the above $\overline u$, $\overline w$.

We are left to consider the Fitzhugh-Nagumo model where
\begin{equation}
\vspace{-0.25cm}
f(u,w) = Au(u-a)(u-1) + w\,, \qquad g(u,w) = \epsilon (\gamma w-u)
\,.
\end{equation}
As can be readily checked, in this case we can choose $S=[-m,m]\times [-m/\gamma,m/\gamma]$, provided that $m$ satisfies 
$$-A(m+a)(m+1)+\frac{1}{\gamma}\le 0\le A(m-a)(m-1)-\frac{1}{\gamma}\,.$$
Hence, by elementary calculations we deduce the conditions \eqref{ulfhn} on $m$.
\end{proof}
\begin{remark}
    It is worthwhile to remark that non-negative constant lower and upper solutions also exist in the Fitzhugh-Nagumo model if the product $\gamma A$ is large enough.\\
    In fact, we first observe that by taking
    $\underline u>0$ and $\underline w=0$, we get $g(u,0)=-\epsilon u < 0$; moreover, the inequality
    $$f(\underline u,w)=A \underline u(\underline u-a)(\underline u-1)+w\le 0$$
could be satisfied if $a<\underline u<1$ and $w>0$ is small enough.
Actually, by defining $m_a=-\min_{a<u<1}u(u-a)(u-1)>0$ and by choosing $\underline u$ at the minimum point, the above bound is satisfied if 
$\overline w\le A m_a$. \\
Finally, the upper solutions must satisfy
$$g(u,\overline w)= 
\epsilon (\gamma \overline w-u)\ge  \epsilon (\gamma \overline w- \overline u)\ge 0$$ and 
$$f(\overline u,w)=A \overline u(\overline u-a)(\overline u-1)+w\ge 0.$$
These two conditions lead to $\overline u\le \gamma\overline w\le \gamma A m_a$
and to $\overline u>1$ respectively. The last inequalities are compatible only if $\gamma A>1/m_a$.
\end{remark}
Collecting the results obtained so far, we can state
\begin{theorem}
Let Assumptions \ref{as:1}-\ref{as:4} hold and let $(f,g)$, be defined as in \eqref{eq:AP}, \eqref{eq:FHN}, \eqref{eq:RMC} respectively. We also assume the following initial data:
\begin{itemize}
    \item $(u_0,v_0)\in [0,\overline{u}]\times [0,\overline{w}]$, where $\overline{u}$, $\overline{w}$ satisfy \eqref{APbds} for $(f,g)$ in \eqref{eq:AP} (Aliev-Panfilov model), and \eqref{RMCbds} for $(f,g)$ in \eqref{eq:RMC} (Rogers-McCulloch model);
    \item $(u_0,v_0)\in [-m,m]\times [-m/\gamma,m/\gamma]$, with $m$ 
satisfying \eqref{ulfhn} for $(f,g)$ in \eqref{eq:FHN} (Fitzhugh-Nagumo model).
\end{itemize}
Then, problem \eqref{probcav} admits a unique classical solution $(u,w)$, namely $u\in C^{2+\alpha,1+\alpha/2}(\overline{{Q}_T})$,
$w\in C^{\alpha,1+\alpha/2}(\overline{{Q}_T})$. Moreover, $(u,v)\in \mathcal{S}$ for every $T$, where $\mathcal{S}$ denotes any of the above rectangles depending on the model considered. 
\label{th:final}
\end{theorem}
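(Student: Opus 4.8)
The plan is to read the statement off as a direct corollary of Theorem \ref{theobounds} combined with the constant upper and lower solutions produced in the foregoing Lemma, the only genuinely new point being the uniformity in $T$. First I would observe that each of the three nonlinear pairs \eqref{eq:AP}, \eqref{eq:FHN}, \eqref{eq:RMC} is polynomial, hence of class $C^\infty$; consequently $f$ and $g$ are Lipschitz on every bounded subset of $\R^2$, so the local Lipschitz condition \eqref{liploc} holds with constants $M_1(R)$, $M_2(R)$ on each ball $|v|+|w|\le R$. This is all that Theorem \ref{theobounds} demands of the reaction terms.

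Next I would feed in the constant upper and lower solutions. The foregoing Lemma exhibits, for each model, constants $\underline u\le\overline u$ and $\underline w\le\overline w$ satisfying the differential inequalities of Definition \ref{def:gculs}: for constants the time derivative and the divergence term vanish, so \eqref{upperprob} and \eqref{lowerprob} collapse to the pointwise algebraic inequalities $f(\overline u,z)\ge 0\ge f(\underline u,z)$ for $z\in[\underline w,\overline w]$ and $g(v,\overline w)\ge 0\ge g(v,\underline w)$ for $v\in[\underline u,\overline u]$, which are exactly the inequalities verified in the Lemma. Likewise the conormal condition $K_0\nabla\overline u\cdot\nu=0$ is trivially met because $\nabla\overline u=0$, and the required interior regularity of a constant is immediate. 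The hypothesis on the initial data places $(u_0,w_0)$ in the rectangle $\mathcal{S}=[\underline u,\overline u]\times[\underline w,\overline w]$, so that $\underline u\le u_0\le\overline u$ and $\underline w\le w_0\le\overline w$ pointwise, which furnishes the initial-data ordering in \eqref{upperprob}--\eqref{lowerprob}.

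With these ingredients Theorem \ref{theobounds} applies verbatim and yields, on each cylinder $Q_T$, a unique classical solution $(u,w)$ with the regularity $u\in C^{2+\alpha,1+\alpha/2}(\overline{Q_T})$, $w\in C^{\alpha,1+\alpha/2}(\overline{Q_T})$ inherited from the parabolic regularity results invoked in Theorem \ref{th:1}, together with the two-sided bounds $\underline u\le u\le\overline u$, $\underline w\le w\le\overline w$; the latter are precisely the statement $(u,w)\in\mathcal{S}$.

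The step I expect to be the main obstacle — and the only one carrying content beyond bookkeeping — is the claim that the conclusion holds \emph{for every} $T$, i.e. global existence in time. The decisive feature is that the upper and lower solutions, and hence the trapping rectangle $\mathcal{S}$, are \emph{constants independent of $T$}. This has two consequences: on the one hand the a priori $L^\infty$ bound $(u,w)\in\mathcal{S}$ does not deteriorate as $T\to\infty$, so no finite-time blow-up can occur; on the other hand the truncation of $f$, $g$ outside $\mathcal{S}$ used in the proof of Theorem \ref{theobounds} renders them globally Lipschitz with $T$-independent constants, and the contraction estimate behind the fixed point — the bound $1/\kappa$ on $\int_0^t e^{-\kappa(t-\tau)}\,d\tau$ together with the maximum-principle inequality \eqref{eq:estA} — is itself uniform in $T$. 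Hence the solution produced for one value of $T$ coincides, by uniqueness, with the restriction of the solution for any larger value, and the family patches together into a single global solution lying in $\mathcal{S}$ for all times.
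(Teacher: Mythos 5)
Your proposal is correct and follows essentially the same route as the paper: Theorem \ref{th:final} is stated there precisely as a corollary of Theorem \ref{theobounds} applied with the constant upper and lower solutions constructed in the preceding Lemma, with the local Lipschitz property of the polynomial nonlinearities and the $T$-independence of the trapping rectangle $\mathcal{S}$ giving global-in-time existence exactly as you argue. Your write-up in fact makes explicit the patching-by-uniqueness argument for arbitrary $T$, which the paper leaves implicit.
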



\par

\section{Analysis of the inverse problem: Uniqueness}

Consider the initial-boundary value problem
\begin{equation}
\left\{
\begin{aligned}
\partial_t u - div(K_0\nabla u) + f(u,w) &= 0 \qquad &\text{in } \Omega_D, \\
K_0 \nabla u \cdot \normal &= 0 \qquad &\text{on }  \partial\Omega_D, \\
\partial_t w + g(u,w) &= 0 \qquad &\text{in } \Omega_D\times (0,T) , \\
u(\cdot,0) = u_0 \qquad w(\cdot,0) &= w_0 \qquad &\text{in }  \Omega_D,
\end{aligned}
\right.
\label{probcav1}
\end{equation}
In this section, we  will investigate the uniqueness of the inverse problem i.e.
\begin{problem}
\label{inverseproblem}
\label{ext} Assume it is possible to measure the solution to (\ref{probcav1}), $u_{\Sigma\times(0,T)}$ where $\Sigma$ is an open connected portion of $\partial\Omega$. Is it possible to uniquely determine $D$? 
\end{problem}
The answer is positive under the further assumption 
that the support of the initial datum $u_0$ lies so close to the boundary of $\Omega$ that it never intersects the cavity $D$. More precisely we assume that
\begin{assumption}\label{as:5}
\begin{equation}
\textrm{dist}(\textrm{supp}(u_0),\partial\Omega)\leq d_0/2.
\end{equation}
\end{assumption}
Then the following result holds:
\begin{theorem}\label{uniqueness}
Let Assumptions  \ref{as:1}-\ref{as:4} and \ref{as:5} hold and let $(u_1,w_1)$ and $(u_2,w_2)$  be solutions of \ref{probcav1} corresponding to $D=D_1$ and $D_2$ respectively. If $u_1=u_2$ on $\Sigma\times (0,T)$ where $\Sigma$ is an open portion of $\partial\Omega$ then $D_1=D_2$.
\end{theorem}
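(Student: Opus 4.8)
The plan is to collapse the coupled system into a single scalar nonlocal equation for the potential, to linearize the difference of the two solutions on the region where both are defined, to transport the boundary information into the interior by the Three Cylinder Inequality, and to close with a geometric argument. First I would eliminate the gating variable: in each of \eqref{eq:AP}, \eqref{eq:FHN}, \eqref{eq:RMC} the function $g$ is affine in $w$, so for fixed $u$ the third equation in \eqref{probcav1} is a linear scalar ODE in $t$, and integrating it gives, on the common region,
\begin{equation}
w_i(x,t)=e^{-ct}w_0(x)+\int_0^t e^{-c(t-s)}\,h\big(u_i(x,s)\big)\,ds,\qquad i=1,2, \nonumber
\end{equation}
with a model-dependent constant $c>0$ and a polynomial $h$. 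Substituting into $f$ turns the first equation into a parabolic integro-differential equation for $u_i$ alone.

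Let $G$ be the connected component of $\Omega\setminus\overline{D_1\cup D_2}$ whose boundary contains $\Sigma$. Since $D_1,D_2$ lie at distance $\ge d_0$ from $\partial\Omega$ by Assumption \ref{as:3} and, by Assumption \ref{as:5}, $\mathrm{supp}(u_0)$ lies within $d_0/2$ of $\partial\Omega$, the set $G$ contains a full neighborhood of $\partial\Omega$ together with $\mathrm{supp}(u_0)$, and both pairs $(u_i,w_i)$ solve \eqref{probcav1} in $G\times(0,T)$. Setting $U:=u_1-u_2$ and subtracting the two scalar equations, I would treat the cubic and quadratic terms by the mean value theorem, using the uniform bound $(u_i,w_i)\in\mathcal S$ from Theorem \ref{th:final}; crucially the $e^{-ct}w_0$ contributions cancel, so no source survives and one obtains the closed linear homogeneous equation
\begin{equation}
\partial_t U-\mathrm{div}(K_0\nabla U)+b(x,t)\,U+\int_0^t \mathcal K(x,t,s)\,U(x,s)\,ds=0\quad\text{in } G\times(0,T), \nonumber
\end{equation}
with bounded coefficients $b,\mathcal K$. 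This is exactly the integro-differential class for which, via \cite{book:escvess} and \cite{art:Vessella2003}, the Three Cylinder Inequality is available. Moreover $U=0$ on $\Sigma\times(0,T)$ by hypothesis, $K_0\nabla U\cdot\normal=0$ on $\Sigma\times(0,T)$ because both $u_i$ satisfy the homogeneous conormal condition on $\partial\Omega\supset\Sigma$, and $U(\cdot,0)=0$. Starting from a space-time cylinder based on $\Sigma$, where the Cauchy data of $U$ vanish, I would then chain the inequality along a path inside $G$, using the vanishing initial datum at $t=0$, to propagate $U\equiv0$ to all of $G$, i.e. $u_1=u_2$ in $G\times(0,T)$.

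For the identification I argue by contradiction. If $D_1\neq D_2$, then up to interchanging indices the set $\mathcal O:=D_2\setminus\overline{D_1}$ has a connected component $\mathcal O_*$ bounded in part by a $(d-1)$-dimensional portion $\gamma\subset\partial D_2$ interior to $\Omega\setminus\overline{D_1}$, the rest of $\partial\mathcal O_*$ lying on $\partial D_1$. On $\gamma$ the insulating condition for the $D_2$-problem gives $K_0\nabla u_2\cdot\normal=0$; since $u_1=u_2$ on the $G$-side of $\gamma$ and $u_1$ is a classical solution in a full neighborhood of $\gamma$ (as $\gamma\subset\Omega\setminus\overline{D_1}$), the same holds for $u_1$. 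Combined with the insulating condition on the part of $\partial\mathcal O_*$ lying on $\partial D_1$, the potential $u_1$ solves the scalar equation in $\mathcal O_*\times(0,T)$ with homogeneous conormal derivative on all of $\partial\mathcal O_*$; since $\mathcal O_*\subset D_2$ is at distance $\ge d_0$ from $\partial\Omega$, Assumption \ref{as:5} forces $u_1(\cdot,0)=u_0\equiv0$ on $\mathcal O_*$. Exploiting in addition the full Cauchy data carried by $u_1$ on $\gamma$ (both $K_0\nabla u_1\cdot\normal=0$ and the known trace $u_1=u_2$), a further unique continuation argument as in \cite{BCPcav} rules out such an $\mathcal O_*$ and yields $D_1=D_2$.

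The main obstacle is twofold. The genuinely new analytic point is the unique continuation itself: the equation for $U$ is nonlocal in time, so the Three Cylinder Inequality cannot be quoted verbatim from the purely differential theory and must be obtained for the integro-differential class, treating the memory term as a controlled perturbation in the spirit of \cite{book:escvess} and \cite{art:Vessella2003}. The more delicate point is the closing geometric step: the extra region $\mathcal O_*$ inherits only a homogeneous Neumann condition and a vanishing $u$-initial datum, while the gating datum $w_0$ need not vanish there, so the contradiction cannot come from a bare energy estimate and must be extracted from the over-determined Cauchy data available on $\gamma$ through a second unique continuation. I expect this last step to require the most care.
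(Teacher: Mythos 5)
Your first three steps coincide in substance with the paper's proof: integrating out the gating variable, linearizing the difference $U=u_1-u_2$ into a homogeneous parabolic integro-differential equation with bounded coefficients (the paper derives the equivalent differential inequality $|\partial_t U-\mathrm{div}(K_0\nabla U)|\le \Lambda_0(|U|+\Lambda_1\int_0^t|U|)$ via a Gronwall bound on $w_1-w_2$), and propagating $U\equiv 0$ from $\Sigma$ into $G$ by the Three Cylinder Inequality of Theorem \ref{threecylinderest}. The genuine gap is in your contradiction step, and it is twofold. First, the geometry: you take the extra region $\mathcal O_*$ to be a component of $D_2\setminus\overline{D_1}$ and transfer the insulating condition across $\gamma\subset\partial D_2$ using ``$u_1=u_2$ on the $G$-side of $\gamma$''. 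But the exterior side of $\gamma$ lies in \emph{some} component of $\Omega\setminus(D_1\cup D_2)$, which need not be $G$: take $D_1$ a thick spherical shell and $D_2$ a ball inside its hole. Then $\mathcal O_*=D_2$ and $\gamma=\partial D_2$ faces only the trapped component between $\partial D_2$ and the inner boundary of $D_1$, where equality of $u_1$ and $u_2$ is not known; interchanging indices does not help, since the inner sphere of $\partial D_1$ has the same defect. The paper's choice avoids this by construction: its region $\tilde D$ is a connected component of $(\Omega\setminus G)\setminus D_2$, so $\partial\tilde D\subset\partial D_2\cup(\partial D_1\cap\partial G)$; the function considered there is $u_2$, which is defined on all of $\Omega\setminus D_2$ (including inside $D_1$ and in trapped regions), it carries its own zero conormal data on the $\partial D_2$ portion, and the transfer from $u_1$ is needed only on $\partial D_1\cap\partial G$, where adjacency to $G$ is automatic.

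Second, your endgame is not an argument. Over-determined but nonzero Cauchy data for $u_1$ on $\gamma$ cannot by themselves produce a contradiction: unique continuation from Cauchy data lets you compare two solutions, and inside $\mathcal O_*\subset D_2$ there is no second solution to compare with, since $u_2$ does not exist there. What the paper actually does at this point is prove a dedicated energy lemma (Lemma \ref{energymethod}): for the scalar nonlocal equation with zero conormal data on the \emph{whole} boundary of $\tilde D$ and zero initial datum, a weighted energy identity, Minkowski's integral inequality and Gronwall give $u_2\equiv 0$ on $\tilde D\times(0,T)$; then the Three Cylinder Inequality is applied once more in $\Omega\setminus D_2$ to propagate $u_2\equiv 0$ everywhere, contradicting $u_0\not\equiv 0$ at $t=0$. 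Your observation that $w_0$ need not vanish on the extra region is a fair criticism --- the paper indeed asserts $w_2(\cdot,0)=0$ on $\tilde D$, which Assumption \ref{as:5} does not give; for the Aliev--Panfilov and Rogers--McCulloch couplings the resulting term $k_1e^{-c_1t}w_0=u_2\,e^{-c_1t}w_0$ is proportional to $u_2$ and can be absorbed into the zeroth-order coefficient, so the energy lemma survives, whereas for FitzHugh--Nagumo ($k_1=1$) a genuine source remains. But flagging this difficulty and deferring to an unspecified ``second unique continuation as in \cite{BCPcav}'' leaves your proof incomplete exactly where the paper supplies Lemma \ref{energymethod} and the final application of Theorem \ref{threecylinderest}.
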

To prove the theorem we will state and prove some preliminary results. 
\subsection{Estimates of unique continuation}
We will show that solutions of system (\ref{probcav1}) enjoy the unique continuation property by deriving a Three Cylinder Inequality.
We denote by $B_r$ the open ball of radius r in $\R^d$ centered at the origin and by $Q^T_r$ the cylinder $Q^T_r=B_r\times (0,T)$ in $\R^{d+1}$.
\begin{theorem}\label{threecylinderest}
    Let $u\in H^{2,1}\left(Q^T_{R}\right)$ such that
    \begin{equation}\label{disug}
        \left|\partial_t u - div(K_0\nabla u)\right|\leq \Lambda_0\left(|u|+|\nabla u|+\Lambda_1\int_0^t|u(x,s)|ds\right)
\quad\mbox{ in } Q^T_{R},    \end{equation}
where $K_0$ satifies assumptions \ref{as:2a}, \ref{as:2b}, and 
\[u(x,0)=0.\]
There exists a constant $C$ depending on $\lambda$, $\|K_0\|_{C^{1,\alpha}}$ and on $\Lambda_0$ and $\Lambda_1$ such that, for 
\[r\leq\rho\leq R/C\] and $\delta\in(0,T)$, we have
\begin{equation}\label{TC}
    \|u\|_{L^2\left(Q^{T-\delta}_\rho\right)}\leq \left(\frac{CTR}{\delta\rho}\right)^\beta \|u\|^\theta_{L^2\left(Q^{T}_r\right)}\|u\|^{1-\theta}_{L^2\left(Q^{T}_R\right)}
\end{equation}
where 
\begin{equation}
\theta=\frac{\log\frac{R}{C\rho}}{C\log \frac{R}{r}} \mbox{ and } \beta=C\left(\frac{R^2}{T}+\frac{T}{\delta}\right)^C. 
\end{equation}
\end{theorem}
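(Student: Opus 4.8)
The plan is to deduce \eqref{TC} from the Carleman-estimate machinery for second-order parabolic operators developed in \cite{book:escvess} and \cite{art:Vessella2003}, treating the memory term $\Lambda_1\int_0^t|u(x,s)|\,ds$ as a perturbation to be absorbed. Under Assumptions \ref{as:2a}--\ref{as:2b} the principal operator $Lu:=\partial_t u-\diverg(K_0\nabla u)$ is uniformly parabolic with $C^{1,\alpha}$ coefficients, which is exactly the setting of those references. The starting point is therefore the weighted (Carleman) estimate they establish for $L$ on the cylinder $Q^T_R$, with a weight $e^{2\tau\varphi}$ adapted to the concentric balls and exploiting the vanishing initial datum, in which both $u$ and $\nabla u$ appear on the left-hand side with positive powers of the large parameter $\tau$.

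First I would record that the zeroth- and first-order contributions $\Lambda_0(|u|+|\nabla u|)$ in \eqref{disug} are handled exactly as in the cited works: since the Carleman estimate controls, schematically, $\tau^{3}\int|u|^2e^{2\tau\varphi}$ and $\tau\int|\nabla u|^2e^{2\tau\varphi}$, these local terms are absorbed into the left-hand side once $\tau$ exceeds a constant depending on $\lambda$, $\|K_0\|_{C^{1,\alpha}}$ and $\Lambda_0$. The genuinely new point---and the step I expect to be the main obstacle---is the absorption of the nonlocal term $\Lambda_1\int_0^t|u(x,s)|\,ds$. Here I would use the hypothesis $u(x,0)=0$ together with Cauchy--Schwarz in time, $\big(\int_0^t|u|\,ds\big)^2\le T\int_0^T|u(x,s)|^2\,ds$, and then Fubini to interchange the time integrations. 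The crux is to show, using the explicit monotonicity and comparability of $\varphi$ in the time variable, that the weighted $L^2$-norm of the memory term is dominated by $C\tau^{-1}\int_{Q^T_R}|u|^2e^{2\tau\varphi}$, so that it too is absorbable for $\tau$ large. Extracting this bound from the precise structure of the weight is the only place where the argument truly departs from \cite{book:escvess,art:Vessella2003}.

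With all lower-order and memory contributions absorbed, the remainder is the standard passage from a Carleman estimate to a three-cylinder inequality. I would insert a cutoff function equal to $1$ on $Q^{T-\delta}_\rho$, supported away from the center and from the final time, discard the favorable interior terms, and bound the commutator terms---localized in the two spatial shells near $\partial B_r$ and near $\partial B_R$---by $\|u\|_{L^2(Q^T_r)}$ and $\|u\|_{L^2(Q^T_R)}$ respectively. Optimizing over $\tau$ then yields the Hölder interpolation, the trade-off producing the exponent $\theta=\log\frac{R}{C\rho}/\big(C\log\frac{R}{r}\big)$, while the prefactor $\big(\frac{CTR}{\delta\rho}\big)^\beta$ and $\beta=C\big(\frac{R^2}{T}+\frac{T}{\delta}\big)^C$ encode the dependence of the weight on the geometry of the cylinders and on the cut at level $t=T-\delta$, which is what keeps the estimate away from the final time where the construction degenerates.
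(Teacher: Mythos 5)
Your proposal follows essentially the same route as the paper: the paper's proof of this theorem is precisely a deferral to the Carleman estimate of \cite{book:escvess} and the Carleman-to-three-cylinder argument of \cite{art:Vessella2003}, with the remark that the memory term in \eqref{disug} produces one extra term that can be absorbed into the others. Your Cauchy--Schwarz-in-time/Fubini mechanism for absorbing the nonlocal term (using $u(\cdot,0)=0$ and the time behavior of the Carleman weight) is a concrete realization of what the paper dismisses as ``quite standard,'' so your outline is, if anything, more explicit than the paper's own proof.
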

\begin{proof}
    The proof can be obtained by following the proofs in \cite{book:escvess} (that contains the Carleman estimate needed for this result) and \cite{art:Vessella2003} (that contains the proof of how Three Cylinder Inequality follows from Carleman estimate). The main difference in the present case is the integral term in the right-hand side of \eqref{disug}. 
    This term gives rise in the estimates of \cite{art:Vessella2003} to an extra term that can be easily absorbed by the other terms.
    Since the changes that one needs are quite standard, for sake of shortness, we decided not to include further details. 
\end{proof}
\subsection{An auxiliary lemma}
\begin{lemma}\label{energymethod}
Let $\Omega^*\subset \mathbb{R}^3$ be a bounded measurable set such that $\mathcal{H}^2(\partial\Omega^*)<+\infty$ (where $\mathcal{H}^2$ is the two-dimensional Hausdorff measure)  and let  $u \in C^{2+\alpha,1+\alpha/2}(\overline{\Omega}^*\times [0,t^*])$ be a solution of 
\begin{equation}\label{linearprob}
\left\{
\begin{aligned}
\partial_t u - \textrm{div}(K_0\nabla u) + f_1(u)+k_1\int_0^te^{-c_1(t-s)}g_1(u)ds &= 0 \qquad &\text{in } \Omega^*\times(0,t^*), \\
K_0 \nabla u \cdot \normal &= 0 \qquad &\text{on }  \partial\Omega^*\times(0,t^*), \\
u(\cdot,0) &=0 \qquad &\text{in } \Omega^*,
\end{aligned}
\right.
\end{equation}
with 
\[
f_1(u)=a_1(x,t)u
\]
and there exists a positive constant $C_0$ such that 
\[
|a_1(x,t)|,\,|k_1(x,t)|\leq C_0\,\, \textrm{ for all }(x,t)\in\overline{\Omega}^*\times [0,t^*].
\] 
$c_1$ positive constant, $g_1\in C^1(\mathbb{R})$, $g_1(0)=0$ and  $$\exists\, C_1 \textrm{ such that } |g_1'(u)|\leq C_1. $$
Then $u=0$ in $\overline{\Omega}^*\times[0,t^*]$.
\end{lemma}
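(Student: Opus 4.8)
The statement is a uniqueness result for a linear integro-differential parabolic problem with homogeneous conormal boundary data and vanishing initial datum, so the natural strategy --- consistent with the label of the lemma --- is a standard energy estimate closed by Gronwall's inequality. The plan is to test the equation against $u$ itself. Setting
\[
E(t):=\frac{1}{2}\int_{\Omega^*}u(x,t)^2\,dx,
\]
multiplying the first equation of \eqref{linearprob} by $u$, integrating over $\Omega^*$ and integrating by parts the diffusion term, the conormal condition $K_0\nabla u\cdot\nu=0$ kills the boundary contribution and one is left with
\[
E'(t)+\int_{\Omega^*}\nabla u\cdot K_0\nabla u\,dx+\int_{\Omega^*}a_1 u^2\,dx+\int_{\Omega^*}k_1\,u\int_0^t e^{-c_1(t-s)}g_1(u(x,s))\,ds\,dx=0 .
\]
By the ellipticity condition \eqref{ellipcond} the quadratic gradient term is nonnegative and can simply be discarded, producing an upper bound for $E'(t)$.

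It then remains to absorb the two zeroth-order terms into $E$. The first is immediate from $|a_1|\le C_0$, giving $\bigl|\int_{\Omega^*}a_1u^2\bigr|\le 2C_0 E(t)$. For the memory term the hypotheses $g_1(0)=0$ and $|g_1'|\le C_1$ yield $|g_1(u)|\le C_1|u|$ by the mean value theorem; combined with $|k_1|\le C_0$ and the trivial bound $e^{-c_1(t-s)}\le 1$ for $s\le t$, a Young inequality in the $(x,s)$ variables bounds this term by $C_0C_1\bigl(t^*E(t)+\int_0^t E(s)\,ds\bigr)$. Collecting everything gives a differential inequality of the form
\[
E'(t)\le C_2\,E(t)+C_3\int_0^t E(s)\,ds,\qquad E(0)=0,
\]
with $C_2,C_3$ depending only on $C_0,C_1$ and $t^*$. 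Introducing the nonnegative auxiliary function $G(t):=E(t)+\int_0^t E(s)\,ds$, one verifies $G'(t)\le C_4\,G(t)$ for a suitable $C_4$, with $G(0)=0$; Gronwall's inequality then forces $G\equiv 0$, hence $E\equiv 0$, and the continuity of $u$ on $\overline{\Omega}^*\times[0,t^*]$ upgrades this to $u\equiv 0$.

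The only genuinely delicate point, and the reason for the assumption $\mathcal{H}^2(\partial\Omega^*)<+\infty$, is the justification of the integration by parts on the irregular set $\Omega^*$, which is assumed merely measurable. Finiteness of the two-dimensional Hausdorff measure of the topological boundary ensures that $\Omega^*$ has finite perimeter, so that the Gauss--Green formula is applicable and the boundary term $\int_{\partial\Omega^*}u\,(K_0\nabla u\cdot\nu)\,d\mathcal{H}^2$ is well defined; the homogeneous conormal condition then makes it vanish. Once this step is secured, the remainder is the routine Gronwall argument sketched above, the estimate of the convolution term being the only computation requiring mild care, and there the exponential kernel only helps.
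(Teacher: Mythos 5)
Your proof is correct and follows essentially the same route as the paper's: an energy estimate obtained by testing the equation with $u$, with the integration by parts justified via the Gauss--Green formula on sets of finite perimeter (exactly the role of the hypothesis $\mathcal{H}^2(\partial\Omega^*)<+\infty$), the memory term controlled through $|g_1(u)|\le C_1|u|$, and a Gronwall argument to conclude. The only differences are cosmetic: the paper multiplies by $e^{-4C_0t}u$, integrates in space-time, bounds the convolution term by Cauchy--Schwarz and Minkowski, and applies Gronwall to $h(t)=\max_{0\le s\le t}\|u(s)\|^2_{L^2(\Omega^*)}$, whereas you work with the pointwise-in-time differential inequality for $E(t)$, use Young's inequality, and close Gronwall on the auxiliary function $G(t)=E(t)+\int_0^t E(s)\,ds$ --- both variants are standard and equivalent.
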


\begin{proof}
Let us multiply the equation 
\[
\partial_t u - \textrm{div}(K_0\nabla u) + a_1 u+k_1\int_0^te^{-c_1(t-s)}g_1(u)ds = 0 
\]
by $e^{-4 C_0 t}u$ and let us integrate over $\Omega^*\times(0,t)$ for any $t\in (0,t^*]$. We then get 
\begin{equation*}
\begin{aligned}
\iint_{\Omega^*\times(0,t)}\bigg\{\partial_s ue^{-4C_0 s}u &- \textrm{div}(K_0\nabla u)e^{-4C_0 s}u + a_1u^2e^{-4C_0 s}\\
&+k_1ue^{-4C_0 s}\int_0^se^{-c_1(s-\tau)}g_1(u)d\tau\bigg\}\,dxds = 0 
\end{aligned}
\end{equation*}
Since the solution $u$ to (\ref{linearprob})  is in $ C^{2+\alpha,1+\alpha/2}(\overline{\Omega}^*\times [0,t^*])$ we can apply Green's formula on sets of finite perimeter (cf. for example \cite{EvansGariepy}) to derive the following identity 
\begin{equation}\label{identity2}
\iint_{\Omega^*\times(0,t)}\textrm{div}(K_0\nabla u)e^{-4C_0 s}u\,dxds=-\iint_{\Omega^*\times(0,t)}K_0\nabla u\cdot \nabla u e^{-4C_0 s}\,dxds
\end{equation}
where we have used the boundary condition.
Note that 
\begin{equation*}
\begin{aligned}
&\iint_{\Omega^*\times(0,t)}\partial_s ue^{-4C_0 s}u\,dxds\\
&=\frac{1}{2} \iint_{\Omega^*\times(0,t)}\frac{\partial}{\partial s}(e^{-4C_0 s}u^2)\,dxds+2C_0 \iint_{\Omega^*\times(0,t)}e^{-4C_0 s}u^2\,dxds\\ 
&=\frac{1}{2}\int_{\Omega^*}e^{-4C_0 t}u^2(x,t)\,dx+
2C_0\iint_{\Omega^*\times(0,t)}e^{-4C_0 s}u^2\,dxds
\end{aligned}
\end{equation*}
where we have used the condition $u(x,0)=0$. Hence, we can write 

\begin{eqnarray*}
\frac{1}{2}\int_{\Omega^*}e^{-4C_0 t}u^2(x,t)\,dx&+&
\iint_{\Omega^*\times(0,t)}K_0\nabla u\cdot \nabla u e^{-4 C_0 s}\,dxds\\
&+&\iint_{\Omega^*\times(0,t)}e^{-4 C_0s}(a_1u^2+2C_0u^2)\,dxds\\
&+&\iint_{\Omega^*\times(0,t)}e^{-4C_0s}k_1u\mathcal{K}[u](s)\,dxds=0
\end{eqnarray*}
where we have set 
$$
\mathcal{K}[u](s)=\int_0^se^{-c_1(s-\tau)}g_1(u)d\tau.
$$
By the condition $|a_1|\leq C_0$ we have that 
$$
a_1u^2+2C_0 u^2\geq C_0 u^2.
$$
Hence, we can write 
\begin{equation}
\begin{aligned}
\frac{1}{2}e^{-4C_0 t}\|u(t)\|^2_{L^2(\Omega^*)}+
\lambda^{-1}\iint_{\Omega^*\times(0,t)}|\nabla u|^2e^{-4 C_0 s}\,dxds
+C_0\iint_{\Omega^*\times(0,t)}e^{-4 C_0s}u^2\,dxds\\
\leq -\iint_{\Omega^*\times(0,t)}e^{-4C_0s}k_1u\mathcal{K}[u](s)\,dxds
\end{aligned}
\label{identity4}
\end{equation}
where we have also used the coercivity of the matrix $K_0$. Finally, let us bound 
\begin{eqnarray}\label{est1}
\left|\iint_{\Omega^*\times(0,t)}e^{-4 C_0s}k_1u\mathcal{K}[u](s)\,dxds\right|
\leq C_0 \int_0^t e^{-4 C_0 s}\int_{\Omega^*}|u(x,s)||\mathcal{K}[u](s)|dxds
\end{eqnarray}
Observe now that
\begin{eqnarray*}
  \int_{\Omega^*}|u(x,s)||\mathcal{K}[u](s)|dx&\leq&   \left(\int_{\Omega^*}|u(x,s)|^2dx\right)^{1/2} \left(\int_{\Omega^*}|\mathcal{K}[u](s)|^2dx\right)^{1/2}\\
  &=& \|u(s)\|_{L^2(\Omega^*)}\left(\int_{\Omega^*}\left(\int_0^s e^{-c_1(s-\tau)}g_1(u)d\tau\right)^2 dx\right)^{1/2}\\
  &\leq& C_1  \|u(s)\|_{L^2(\Omega^*)}\left(\int_{\Omega^*}\left(\int_0^s e^{-c_1(s-\tau)}|u|d\tau\right)^2 dx\right)^{1/2}\\
  &\leq & C_1 \|u(s)\|_{L^2(\Omega^*)}\int_0^s\|u(\tau)\|_{L^2(\Omega^*)}d\tau\\
  &\leq& C_1t^* \|u(s)\|_{L^2(\Omega^*)}\max_{0\leq \tau\leq s}\|u(\tau)\|_{L^2(\Omega^*)}
\end{eqnarray*}
where we have used Minkowski inequality, that is
\begin{equation*}
\left(\int_{\Omega^*}\left(\int_0^s|u(x,\tau)| d\tau\right)^2 dx\right)^{1/2}\leq \int_0^s\left(\int_{\Omega^*}|u(x,\tau)|^2dx\right)^{1/2}d\tau
\end{equation*}
and plugging it in (\ref{est1}) and in (\ref{identity4}) we derive the following estimate 
\begin{equation*}
\begin{aligned}
\frac{1}{2}e^{-4C_0 t}\|u(t)\|^2_{L^2(\Omega^*)}+
C_0\iint_{\Omega^*\times(0,t)}|\nabla u|^2e^{-4 C_0s}\,dxds
+C_0\iint_{\Omega^*\times(0,t)}e^{-4 C_0s}u^2\,dxds\\
\leq C\int_0^t \|u(s)\|_{L^2(\Omega^*)}\max_{0\leq \tau\leq s}\|u(\tau)\|_{L^2(\Omega^*)}ds
\leq C \int_0^t \max_{0\leq \tau\leq s}\|u(\tau)\|^2_{L^2(\Omega^*)}ds.
\end{aligned}
\end{equation*}
\medskip
Hence, since the function on the right-hand side is increasing in $t$ and $t$ is arbitrary, it follows that 
\begin{equation*}
\max_{0\leq s\leq t}\|u(s)\|^2_{L^2(\Omega^*)}+
\lambda^{-1}\iint_{\Omega^*\times(0,t)}|\nabla u|^2\,dxds
+C_0\iint_{\Omega^*\times(0,t)}u^2\,dxds\\
\leq \bar C \int_0^t \max_{0\leq \tau\leq s}\|u(\tau)\|^2_{L^2(\Omega^*)}ds,
\end{equation*}
that is
\begin{equation*}
\max_{0\leq s\leq t}\|u(s)\|^2_{L^2(\Omega^*)}
\leq \bar C \int_0^t \max_{0\leq \tau\leq s}\|u(\tau)\|^2_{L^2(\Omega^*)}ds.
\end{equation*}
Finally, using Gronwall's inequality to the function $h(t):=\max_{0\leq s\leq t}\|u(s)\|^2_{L^2(\Omega^*)}$ it follows that $h(t)\leq 0$ which implies $\max_{0\leq s\leq t}\|u(s)\|^2_{L^2(\Omega^*)}=0$ which gives that 
$$u(x,s)=0, \forall (x,s)\in \Omega^*\times (0,t]$$ 
and since $t$ is an arbitrary value in $(0,t^*]$  the claim follows.
\end{proof}
\subsection{Proof of Theorem 3.1}
\begin{proof}
Let $u:=u_1-u_2$ and $w=w_1-w_2$ and let $G$ be the connected component of $\Omega\backslash (D_1\cup D_2)$ containing $\Sigma$. Then, using the form of the nonlinearities $f$ and $g$ we have that
\begin{equation}
\left\{
\begin{aligned}
\partial_t u - div(K_0\nabla u) + a_1u+a_2w &= 0 \qquad &\text{in } \Omega\backslash G\times (0,T), \\
u=K_0 \nabla u \cdot \normal &= 0 \qquad &\text{on }  \Sigma\times (0,T), \\
\partial_t w + a_3u+\epsilon w &= 0 \qquad &\text{in } G\times (0,T) , \\
u(\cdot,0) = 0 \qquad w(\cdot,0) &=0 \qquad &\text{in } G
\end{aligned}
\right.
\label{probcavlin}
\end{equation}
By the uniform estimates of solutions to the problem (\ref{probcav}) we have
\begin{equation}\label{uniformbounds}
|a_i|\leq C_0\,\,\textrm{ in }  G\times [0,T],\quad i=1,2,3.
\end{equation}
Hence, 
\begin{eqnarray}
|\partial_t w|&\leq& C(|u|+|w|),\\
w(\cdot,0)&=&0 \textrm{ in }G.
\end{eqnarray}
and we obtain straightforwardly 
\[
| w(x,t)|\leq \left|\int_0^t\partial_sw(x,s)ds\right|\leq 
C\left(\int_0^t|u(x,s)|ds+\int_0^t|w(x,s)|ds\right)
\]
which implies
\[
\left(| w(x,t)|-C\int_0^t|w(x,s)|ds\right)e^{-Ct}\leq
Ce^{-Ct}\int_0^t|u(x,s)|ds.
\]
Hence
\[
\frac{d}{dt}\left[e^{-Ct}\int_0^t|w(x,s)|ds\right]\leq Ce^{-Ct}\int_0^t|u(x,s)|ds
\]
and integrating it follows that
\[
e^{-Ct}\int_0^t|w(x,s)|ds\leq C\int_0^t\left(e^{-C\eta}\int_0^{\eta}|u(x,s)|ds\right)d\eta
\]
which finally gives
\[
\int_0^t|w(x,s)|ds\leq C\int_0^t\left(e^{C(t-\eta)}\int_0^{\eta}|u(x,s)|ds\right)d\eta\leq (e^{CT}-1)\int_0^{t}|u(x,s)|ds=\bar{C}\int_0^{t}|u(x,s)|ds.
\]
Hence
\[
| w(x,t)|\leq 
C\left(\int_0^t|u(x,s)|ds+\int_0^t|w(x,s)|ds\right)\leq 
(C+C\bar{C})\int_0^t|u(x,s)|ds
\]
and inserting this last inequality and (\ref{uniformbounds}) into the first equation in (\ref{probcavlin}) it follows that
\begin{equation}\label{ineq}
 \left|\partial_t u - div(K_0\nabla u)\right|\leq \Lambda_0\left(|u|+\Lambda_1\int_0^t|u(x,s)|ds\right)
\end{equation}
with $u=u=K_0 \nabla u \cdot \normal = 0 \text{ on }  \Sigma\times (0,T)$ and $u(x,0)=0$ in $G$.
In order to apply the Three Cylinder Inequality (\ref{TC}) we now follow the approach used in \cite{Vessella}. We consider an interior point $P\in \Sigma$ that up to a rigid motion can be fixed to be the origin and such that the unit outward normal vector points in the direction of $-e_3=(-1,0,0)$. Furthermore, we assume that $\partial\Omega\cap Q_{r_0,2M_0}\subset\Sigma$. Note that by the regularity of $u$ it follows that $u\in H^{2,1}((Q_{r_0,2M_0}\cap\Omega )\times (0,T))$. We start extending $u=0$ for $t\leq 0$. Next, we consider the function
$$
\bar{u}=
\begin{cases}
u \text{ in }  (Q_{r_0,2M_0}\cap\Omega)\times (-\infty,T)\\
0 \text{ in } Q_{r_0,2M_0}\backslash (Q_{r_0,2M_0}\cap\Omega)\times (-\infty, T).
    \end{cases}
    $$
    Then since the Cauchy data of $u$ are zero on $\partial\Omega\cap Q_{r_0,2M_0}$ it follows straightforwardly that $\bar u\in H^{2,1}(Q_{r_0,2M_0}\times (-\infty,T))$ and satisfies (\ref{ineq}). We now pick up the ball $B_r(-re_3)$ with $r\in (0,\mu_0r_0)$ and $\mu_0=\min(M_0,\frac{1}{M_0})$ in such a way that $B_r(-re_3)\subset Q_{r_0,2M_0}\backslash ( Q_{r_0,2M_0}\cap\Omega)$ and is tangent to $\Sigma$ at the origin. 
    As a consequence, $\bar u=0$ in $B_r(-re_3)$. Applying the Three Cylinder Inequality to $\bar u$ and reminding that $u\in C^{2+\alpha,1+\alpha/2}$ we finally have that $u=0$ in $(B_{2r}(-re_3)\cap\Omega)\times(0,T]$ and iterating (\ref{TC}) we obtain that $u=0$ in $G\times [0,T-\delta]$ for $\delta\in (0,T)$. 
Let us now argue by contradiction. Assume that $D_1\neq D_2$. 
Let $\tilde{G}=\Omega\setminus G$ and observe that:
$\tilde{G}$ is closed, $\tilde{G}\supseteq D_1\cup D_2$ and

\begin{equation}\label{tildGbd}
\partial\tilde{G}=\big (\partial D_1\cup \partial D_2\big )\cap \partial G\,.
\end{equation}

 Let $\tilde{D}$ be a connected component of $\tilde{G}\setminus D_2$
 (note that $\tilde{D}=D_1$ if $D_1\cap D_2=\emptyset$). Then we have

\begin{equation}\label{tildDbd}
\partial\tilde D\subseteq \partial\big (\tilde{G}\setminus D_2 \big )\subseteq\partial\tilde G\cup\partial D_2\,.
\end{equation}

We further note that, unless $D_1\subset D_2$, we may assume that $\tilde{D}$ contains a subset (of $D_1$) with nonempty interior. Otherwise, we just exchange the roles of $D_1$ and $D_2$.

\smallskip
Let us now define
$\Gamma_1\equiv\partial\tilde{D}\cap\partial D_1\cap\partial\tilde{G}$ and let $\partial\tilde{D}=\Gamma_1\cup \Gamma_2$. Observe that (\ref{tildGbd}) implies that $\Gamma_1\subset\partial D_1\cap \partial G$ and from (\ref{tildDbd}) $\Gamma_2\equiv\partial\tilde{D}\setminus\Gamma_1\subset\partial D_2$ including possibly the case where $\Gamma_2=\emptyset$.
Then, for any choice of the nonlinearity in the system we have that $u_2$ satisfies the following equation
\begin{equation}\label{nonlocaleq}
\partial_t u_2 - \textrm{div}(K_0\nabla u_2) + f_1(u_2) +k_1\int_0^te^{-c_1(t-s)}g_1(u_2)ds = 0 \qquad \text{in } \tilde{D}\times (0,T)
\end{equation}
where $f_1(u)=Au(u-a)(u-1)$ and with the boundary and initial conditions 
\begin{equation}\label{conditions}
K_0 \nabla u_2 \cdot {\nu} = 0 \quad \text{on }\partial\tilde{D}\times (0,T),\qquad u_2(\cdot,0) = 0  \quad \text{in } \tilde{D}.
\end{equation}
In fact, (\ref{nonlocaleq}) holds with $c_1=\epsilon\gamma$ in the Fitzhugh-Nagumo and Rogers-McCulloch models, $c_1=\epsilon$ in the Aliev-Panfilov model, $k_1=1$ in Fitzhugh-Nagumo model and $k_1=u$ in the Aliev-Panfilov and Rogers-McCulloch models, $g_1(u)=\epsilon u$ in the Fitzhugh-Nagumo and Rogers-McCulloch models and $g_1(u)=\varepsilon Au(u-1-a)$.

In fact, we have that $u_2$ and $w_2$ solve the system
\begin{equation}
\left\{
\begin{aligned}
\partial_t u_2 - div(K_0\nabla u_2) + f_1(u_2)+k_1w &= 0 \qquad &\text{in } \tilde{D}\times (0,T), \\
K_0 \nabla u_2 \cdot \normal &= 0 \qquad &\text{on }  \partial\tilde{D}\times (0,T), \\
\partial_t w_2 + c_1w_2 &= g_1(u_2) \qquad &\text{in } \tilde{D}\times (0,T) , \\
u_2(\cdot,0) =0 \qquad w_2(\cdot,0) &= 0 \qquad &\text{in } \tilde{D}
\end{aligned}
\right.
\end{equation}
Then integrating the equation for $w_2$ we get
$$
w_2(x,t)=\int_0^te^{-c_1(t-s)}g_1(u_2)ds
$$
and plugging it in the first partial differential equation we get exactly (\ref{nonlocaleq}). 
let us now verify that the assumptions of Lemma \ref{energymethod} are satisfied. In fact, using the fact that 
$|u_2|\leq C$ in $\overline{\tilde{D}}\times [0,T]$, we have that $f_1(u_2)=Au_2(u_2-a)(u_2-1)=a_1u_2$ with $|a_1|\leq A(C+1)(C+a)$,  $|k_1|\leq C+1$  and we can pick up $C_0=(C+1)\max\{A(C+a),1\}$. Finally $g_1(0)=0$ and again by the estimates for $u_2$ we get $|g_1(u_2)|\leq \epsilon\max (1, A(2C+1-a))=C_1$.

So,  since $u_2 \in C^{2+\alpha,1+\alpha/2}(\overline{\tilde{D}}\times [0,T])$, we can apply Lemma \ref{energymethod} to conclude that $u_2\equiv0$ on $\tilde D\times(0,T-\delta)$ for some $0<\delta,T$
and, again by the unique continuation property,  
$u_2\equiv0$ in $(\Omega\backslash D_2)\times (0,T-\delta)$. Hence $u_2(x,0)=u_0(x)=0$ for all $x\in \Omega\backslash D_2$ contradicting the assumption $u_0\neq 0$. 
\end{proof} 
\section*{Acknowledgments}
This research has been partially performed in the framework of MIUR-PRIN Grant 2020F3NCPX {\it Mathematics for industry 4.0 (Math4I4)}, and MIUR-PRIN-20227HX33Z – “Pattern formation in nonlinear phenomena”.
Andrea Aspri, Elisa Francini, Dario Pierotti and Sergio Vessella are members of the group GNAMPA (Gruppo Nazionale per l’Analisi Matematica, la Probabilità e le loro Applicazioni) of INdAM (Istituto Nazionale di Alta Matematica).

\bibliographystyle{plain}
\bibliography{bibliography}

\end{document}